\tikzstyle arrowstyle=[scale=1]
\def\@settitle{\begin{center}%
		\bfseries\Large
		\@title
	\end{center}%
}
\patchcmd{\@setauthors}{\MakeUppercase}{\normalsize}{}{}
\theoremstyle{plain}
\newtheorem{thm}{Theorem}[section]      
\newtheorem{lemma}[thm]{Lemma}
\newtheorem{claim}[thm]{Claim}
\newtheorem{prop}[thm]{Proposition}
\newtheorem{cor}[thm]{Corollary}
\newtheorem{conj}[thm]{Conjecture}
\newtheorem{definition}[thm]{Definition}
\newtheorem{quest}[thm]{Question}
\theoremstyle{remark}
\newcommand{\beq}[1]{\begin{equation}\label{#1}}
	\newcommand{\enq}[0]{\end{equation}}
\newcommand{\CC}{{\mathbb C}}
\newcommand{\RR}{{\mathbb R}}
\renewcommand{\Pr}{{\mathbb P}}
\newcommand{\EE}{{\mathbb E}}
\newcommand{\eps}{\epsilon}
\newcommand{\sub}{\subseteq}
\begin{document}
	
	\title{On the reverse Littlewood--Offord problem of Erd\H{o}s}
	
	\author{Xiaoyu He}
	\address{School of Mathematics, Georgia Institute of Technology, Atlanta, GA 30332, USA}
	\email{xhe399@gatech.edu}
	
	\author{Tomas Ju\v skevi\v cius}
	\address{Institute of Computer Science, Vilnius University, Didlaukio 47, LT-08303 Vilnius, Lithuania}
	\email{tomas.juskevicius@gmail.com}
	
	\author{Bhargav Narayanan}
	\address{Department of Mathematics, Rutgers University, Piscataway, NJ 08854, USA}
	\email{narayanan@math.rutgers.edu}
	
	\author{Sam Spiro}
	\address{Department of Mathematics, Rutgers University, Piscataway, NJ 08854, USA}
	\email{sas703@scarletmail.rutgers.edu}
	
	\begin{abstract}
		Let $\epsilon_{1},\ldots,\epsilon_{n}$ be a sequence of independent Rademacher random variables.  Answering a question of Erd\H{o}s from 1945, Beck proved in 1983 --- using techniques from harmonic analysis --- that there is a constant $c>0$ such that for any unit vectors $v_1,\ldots,v_n\in \mathbb{R}^2$, we have
		$$\Pr\left[||\epsilon_1 v_1+\ldots+\epsilon_n v_n||_2 \leq \sqrt{2}\right]\geq \frac{c}{n}.$$
		We give a new, elementary proof of this result using a simple pairing argument that might be of independent interest.
	\end{abstract}
	
	\maketitle
	
	\section{Introduction}
	Broadly speaking, Littlewood--Offord theory asks for estimates on the number of subset sums of a given sequence $V$ of vectors $v_1,\ldots,v_n$ that lie within a target set $S$. This is equivalent to studying the probability that the random signed sum
	\[\sigma_V = \eps_1 v_1+\eps_2v_2+\cdots +\eps_n v_n\]
	lands within a given set, where the $\eps_i$ are independent Rademacher random variables (i.e., independent random variables with $\Pr\left[\epsilon_i = - 1\right]=\Pr\left[\epsilon_i = +1\right]={1}/{2}$).  Littlewood and Offord~\cite{littlewood1939number} originally considered the special case of this problem when each $v_i$ is a complex number of norm at least one and showed that the probability that $\sigma_V$ lies within any open ball of radius one is at most $O(n^{-1/2}\log n)$.  This result was sharpened in seminal work of Erd\H{o}s~\cite{ELO} who used Sperner's theorem to prove that this probability is at most ${n\choose \lfloor n/2\rfloor} 2^{-n}$, which is sharp when $v_i=1$ for all $1 \le i \le n$.
	
	A large amount of work has since been done on extending these classical results, both for `forward' Littlewood--Offord problems like those described above, as well as `inverse' Littlewood-Offord problems in the vein of Tao and Vu~\cite{tao2009inverse} (where one seeks a structural characterization of the vectors $V$ given that $\sigma_V$ is likely to land in $S$).  In addition to being interesting questions in their own right, both types of problems have garnered a great deal of attention due to their many applications in random matrix theory; see, for example~\cite{kahn1995probability,rudelson2008littlewood,tao2008random,tao2009inverse}. Our focus here will be on `reverse' Littlewood--Offord problems that ask for \emph{lower bounds} on the probability that $\sigma_V$ lies within a given target $S$.  Notable examples of such problems include Koml\'os's Conjecture~\cite{spencer1994ten} and Tomaszewski’s Conjecture~\cite{guy1986any}, the latter of which was recently resolved in breakthrough work of Keller and Klein~\cite{KK}.
	
	Erd\H{o}s~\cite{ELO} posed two natural conjectures in his original 1945 paper on the topic.  The first of these asked for an extension of his upper bound of ${n\choose \lfloor n/2\rfloor} 2^{-n}$ for complex numbers of norm at least 1 to vectors of norm at least 1 in arbitrary Hilbert spaces; this was eventually resolved in full by Kleitman~\cite{kleitman1970lemma} (who further extended this bound to arbitrary normed spaces).  Erd\H{o}s' second conjecture, which asks for generally applicable \emph{lower bounds}, is a reverse Littlewood--Offord problem: is it always true that a random signed sum of complex numbers of norm one is fairly likely to fall inside a closed unit ball centred at the origin?  More precisely, for $x_i \in \CC$ and $\eps_i\in \{-1,+1\}$, he raised the following problem:
	\begin{figure}[h!]
		\centering
		\includegraphics[width=0.8\textwidth]{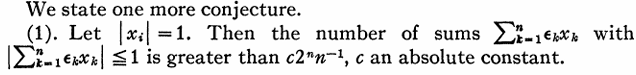}
	\end{figure}
	
	Equivalently, this conjecture states that if $V=(v_1,\ldots,v_n)$ is a sequence of unit vectors in $\mathbb{R}^2$, then $\Pr[\|\sigma_V\|_2 \le 1] = \Omega(n^{-1})$.  It was observed by Carnielli and Carolino~\cite{CC} that Erd\H{o}s' conjecture requires a minor adjustment (and is false as stated): assume that $n$ is even and consider $v_1=(1,0)$ and $v_i=(0,1)$ for all $i>1$. Each coordinate of $\sigma_V$ has absolute value at least $1$, and thus $\sigma_V$ has length at least $\sqrt{2}$.
	
	In view of this counterexample, Carnielli and Carolino adjusted Erd\H{o}s' conjecture by replacing $1$ by $\sqrt{2}$.  The main result of this paper is a new proof of this (adjusted) conjecture.
	
	\begin{thm}\label{thm:main}
		There exists an absolute constant $c>0$ such that for any unit vectors $v_1,\ldots,v_n\in \mathbb{R}^2$ and independent Rademacher random variables $\eps_1,\dots,\eps_n$, we have
		\[\Pr\left[\|\epsilon_1 v_1+\dots+\epsilon_n v_n\|_2 \leq \sqrt{2}\right]\geq \frac{c}{n}.\]
	\end{thm}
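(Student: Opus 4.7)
The heuristic underlying the bound is that $\sigma=\sum_i\eps_iv_i$ has $\EE\|\sigma\|^2=n$, so it lives at scale $\sqrt{n}$ in $\mathbb{R}^2$ and its density near the origin should be $\Theta(1/n)$; my plan is to realize this bound by reducing to two one-dimensional Littlewood--Offord estimates and decoupling them via a pairing of sign configurations.

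First I would normalize using the available symmetries. The sign flip $\eps_i\mapsto-\eps_i$ preserves the law of $\eps_iv_i$, so I may assume every $v_i=(x_i,y_i)$ lies in a fixed closed half-plane; a global rotation of $\RR^2$ then diagonalizes the matrix $\sum_i v_i v_i^{\mathsf T}$, so that $\sum_i x_i y_i=0$ and $\alpha:=\sum_i x_i^2\ge\beta:=\sum_i y_i^2$ with $\alpha+\beta=n$. Writing $X=\sum_i\eps_i x_i$ and $Y=\sum_i\eps_i y_i$, we have $\|\sigma\|^2=X^2+Y^2$, and the box event $\{|X|\le 1,\,|Y|\le 1\}$ is contained in $\{\|\sigma\|\le\sqrt 2\}$; hence it suffices to prove $\Pr[|X|\le 1,\,|Y|\le 1]\ge c/n$.

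A one-dimensional Littlewood--Offord estimate (say, via Berry--Esseen applied to the coordinate sums of variance $\alpha,\beta$) gives the marginal bounds $\Pr[|X|\le 1]\ge c/\sqrt\alpha$ and $\Pr[|Y|\le 1]\ge c/\sqrt\beta$. Since AM--GM yields $\sqrt{\alpha\beta}\le n/2$, the \emph{product} of these marginals is already $\ge 4c^2/n$, which is of the correct order. The obstacle is that $X$ and $Y$ are only uncorrelated ($\EE XY=\sum_i x_i y_i=0$), not independent, so their joint probability can in principle be much smaller than the product. Closing this gap is precisely what the pairing argument must accomplish: I would seek an involution $\phi$ on $\{\pm 1\}^n$ that preserves $X$ (up to negligible error) while reversing $Y$, built by simultaneously flipping pairs of indices $i,j$ whose $x$-contributions cancel and whose $y$-contributions reinforce. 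Such an involution forces the conditional law of $Y$ given $\{|X|\le1\}$ to be symmetric under $Y\mapsto-Y$, and combined with the marginal anti-concentration of $Y$ it delivers the joint bound $c/n$.

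The main obstacle I anticipate is engineering a clean involution: exact pairs with $x_i+x_j=0$ are rare, so the pairing must aggregate many simultaneous flips while carefully controlling the residual shift of $X$. The natural candidate should come from sorting the $v_i$ by angle and pairing indices symmetrically about the dominant axis, but verifying that such a pairing is a genuine involution whose image captures a $\Theta(1/\sqrt n)$ fraction of the configurations satisfying $|X|\le 1$ is where the heaviest lifting must occur.
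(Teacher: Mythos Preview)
Your reduction to the box event $\{|X|\le 1,\ |Y|\le 1\}$ is fine, and the observation that the product of the marginal probabilities is already of order $1/n$ is a good heuristic. But the argument has a genuine gap at the decoupling step, and a smaller one at the marginal step. For the marginal bound $\Pr[|X|\le 1]\ge c/\sqrt{\alpha}$, Berry--Esseen is too blunt: with $|x_i|\le 1$ the error term is $\sum|x_i|^3/\alpha^{3/2}\le \alpha/\alpha^{3/2}=1/\sqrt{\alpha}$, the same order as the target, so it swallows the signal. More seriously, even granting a perfect measure-preserving involution $\phi$ on $\{\pm1\}^n$ that fixes $X$ exactly and sends $Y\mapsto -Y$, all you learn is that the \emph{conditional} law of $Y$ given $\{|X|\le 1\}$ is symmetric about $0$. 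Symmetry gives no lower bound on $\Pr[|Y|\le 1\mid |X|\le 1]$: the conditional distribution could be bimodal and place all its mass near $\pm\sqrt{\beta}$, which is fully consistent both with symmetry and with the unconditional marginal bound $\Pr[|Y|\le 1]\ge c/\sqrt{\beta}$. To get the joint bound you would need a map that \emph{resamples} $Y$ (so that the conditional law matches the marginal), not one that merely negates it; no sign-flipping involution can do that.

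The paper's pairing is of a different nature and sidesteps coordinate decoupling altogether. It pairs the \emph{vectors} $v_{2i-1},v_{2i}$ (after ordering them by argument so consecutive differences are small) and realizes $\sigma_V$ in distribution as $(\sigma^{(1)}-\sigma^{(2)})+\sigma^{(3)}$, where $\sigma^{(1)},\sigma^{(2)}$ are two independent copies of the signed sum of the pairwise averages $\tfrac12(v_{2i-1}+v_{2i})$ and $\sigma^{(3)}$ is a signed sum over a random subset of the differences $v_{2i-1}-v_{2i}$. The key point is that $\sigma^{(1)}-\sigma^{(2)}$, being a difference of two i.i.d.\ vectors, is automatically centered at the origin, so Chebyshev plus pigeonhole on a grid shows it lands in any fixed small ball about $0$ with probability $\Omega(1/n)$; meanwhile $\|\sigma^{(3)}\|$ is small with constant probability by Markov, since the semicircle ordering forces $\sum\|v_{2i-1}-v_{2i}\|^2\le 2$. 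Pushing the radius from $\sqrt{2}+\epsilon$ down to exactly $\sqrt{2}$ then requires a separate stability analysis near the extremal configuration. If you want to salvage your approach, the ``difference of two independent copies'' trick is exactly the missing ingredient that replaces your involution: it centers the problem at the origin for free, without any attempt to decouple $X$ from $Y$.
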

	
	Some time after we found a proof of Theorem~\ref{thm:main}, we discovered that this particular conjecture was solved in the following strong form by Beck~\cite{beck1983geometric} in a somewhat obscure (since it does not reference~\cite{ELO}) paper from 1983.
	
	\begin{thm}\label{thm:Beck}
		For all $d\ge 2$, there exists some $c_d>0$ such that for any unit vectors $v_1,\ldots,v_n\in \mathbb{R}^d$ and independent Rademacher random variables $\eps_1,\dots,\eps_n$, we have
		$$\Pr\left[\|\epsilon_1 v_1+\ldots+\epsilon_n v_n\|_2 \leq \sqrt{d}\right]\geq \frac{c_d}{n^{d/2}}.$$
	\end{thm}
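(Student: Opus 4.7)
The plan is to prove Theorem~\ref{thm:Beck} by induction on $d$, with the base case $d=2$ furnished by Theorem~\ref{thm:main}. For the inductive step from $d-1$ to $d$, I would decompose $\sigma_V$ into a one-dimensional component and an orthogonal $(d-1)$-dimensional component. Averaging $\sum_i \langle v_i,e\rangle^2$ over $e$ uniform on $S^{d-1}$ gives $n/d$, so one can fix a direction $e$ with $\sum_i \langle v_i,e\rangle^2 \le n/d$. Writing $v_i = a_i e + u_i$ with $a_i = \langle v_i, e\rangle$ and $u_i \perp e$, set $\tau = \sum_i \eps_i a_i \in \RR$ and $\pi = \sum_i \eps_i u_i \in e^\perp \simeq \RR^{d-1}$, so that $\|\sigma_V\|^2 = \tau^2 + \|\pi\|^2$, and it suffices to lower-bound $\Pr[|\tau| \le 1 \text{ and } \|\pi\| \le \sqrt{d-1}]$.

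The one-dimensional factor is controlled by a standard small-ball estimate: since $|a_i| \le 1$ and $\sum a_i^2 \le n/d$, a Berry--Esseen / Littlewood--Offord type lower bound yields $\Pr[|\tau| \le 1] \gtrsim \sqrt{d/n}$, matching the expected dimensional scaling. The $(d-1)$-dimensional factor involves vectors $u_i \in e^\perp$ with $\|u_i\|^2 = 1 - a_i^2$; since $\sum \|u_i\|^2 \ge n(d-1)/d$, these are typical of order $1$. After discarding a small set of degenerate indices (those with $\|u_i\|$ near $0$, which are few on average) and renormalizing, the induction hypothesis in dimension $d-1$ should give $\Pr[\|\pi\| \le \sqrt{d-1}] \gtrsim c_{d-1}/n^{(d-1)/2}$. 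Heuristically multiplying these two estimates yields the target rate $c_d/n^{d/2}$.

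The principal obstacle is that $\tau$ and $\pi$ depend on the \emph{same} Rademacher signs and are not independent, so naive multiplication of probabilities is not justified. I would address this by a conditioning/splitting argument: reserve a small tuning set $T \subset [n]$ of size $\Theta(\sqrt{n/d})$ drawn from coordinates with moderate $|a_i|$, use the signs outside $T$ (via the induction hypothesis applied to the $u_i$ with $i \notin T$) to ensure $\|\pi\|$ is small, and then show that a constant fraction of sign assignments on $T$ drive $\tau$ into $[-1,1]$ while perturbing $\pi$ by at most a controlled amount. Verifying that such a splitting succeeds uniformly across configurations of the $v_i$ --- particularly when many vectors are highly aligned with $e$, so that very few indices have moderate $|a_i|$ --- is the delicate step. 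An alternative route, closer to Beck's original approach, is to express $\Pr[\sigma_V \in B(0,\sqrt{d})]$ via Fourier inversion and analyze the characteristic function $\prod_i \cos(2\pi \langle \xi, v_i\rangle)$, exploiting $|\cos\theta| \le 1 - c\theta^2$ on a suitable small-frequency window; this decouples the components automatically at the cost of being less combinatorial than the $d=2$ pairing argument.
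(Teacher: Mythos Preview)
The paper does not actually prove Theorem~\ref{thm:Beck}: it is quoted as Beck's result, proved in \cite{beck1983geometric} by ``deep, harmonic-analysis arguments,'' and the authors explicitly state at the end of Section~\ref{sec:2d} that their pairing method only reaches a bound of order $n^{-d^2/4}$ in $\RR^d$ and that they ``do not know how to use these ideas to obtain the same tight results of (Beck's) Theorem~\ref{thm:Beck}.'' So there is no paper-side proof to compare against beyond the pointer to Beck's Fourier approach, which you yourself mention as your fallback alternative.

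Your primary inductive plan has two genuine gaps. First, the induction hypothesis is stated for \emph{unit} vectors, but the orthogonal components $u_i\in e^\perp$ have $\|u_i\|^2=1-a_i^2<1$ in general; ``renormalizing'' turns $\pi$ into a weighted Rademacher sum $\sum_i \eps_i\|u_i\|\hat u_i$, to which Theorem~\ref{thm:Beck} in dimension $d-1$ does not directly apply, and discarding indices with $\|u_i\|$ small does not fix this since the remaining vectors are still not unit vectors. You would need to carry a stronger inductive statement (for vectors of norm at most $1$, or with weights), which is not what Theorem~\ref{thm:Beck} provides.

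Second, and more seriously, the proposed decoupling via a tuning set $T$ of size $\Theta(\sqrt{n/d})$ cannot work as described. Conditioned on the signs outside $T$, the residual $\tau_{[n]\setminus T}$ is typically of order $\sqrt{n/d}$. For a \emph{constant fraction} of sign assignments on $T$ to land $\tau_T$ in the interval $[-1-\tau_{[n]\setminus T},\,1-\tau_{[n]\setminus T}]$ of length~$2$, one needs both $\mathrm{Var}(\tau_T)=\sum_{i\in T}a_i^2=O(1)$ and the center of that interval to be within $O(1)$ standard deviations of $0$; but the first condition forces $|\tau_T|=O(1)$ with high probability, which is far too small to cancel a residual of order $\sqrt{n}$. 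If instead you take $T$ large enough that $\sum_{i\in T}|a_i|\gtrsim\sqrt{n}$ so that cancellation is even possible, then the corresponding perturbation of $\pi$ is $\sum_{i\in T}\|u_i\|$, which is of the same order and will generically destroy the event $\|\pi\|\le\sqrt{d-1}$. This is exactly the dependence obstruction you flagged, and the splitting you propose does not resolve it; Beck's Fourier-analytic route sidesteps it entirely, which is presumably why the paper's authors could not replace it with the elementary pairing argument in higher dimensions.
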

	
	While Theorem~\ref{thm:main} is just a special case of Beck's theorem, we believe that our new proof has some independent value since, in particular, it uses elementary, geometric methods which (in our opinion) are much simpler than the deep, harmonic-analysis arguments used by Beck.
	
	The rest of this paper is organised as follows. The high-level ideas that form the basis of our arguments are encapsulated in a few key lemmas in Section~\ref{sec:prelim}, and the proof of Theorem~\ref{thm:main} then follows in Section~\ref{sec:2d}. We conclude with a discussion of open problems in Section~\ref{sec:conc}.
	
	\section{The Pairing Argument}\label{sec:prelim}
	For ease of notation, we henceforth write $\|\cdot \|$ for the Euclidean norm unless stated otherwise. Also, given a sequence $V=(v_1,\ldots, v_n)$ of vectors, we exclusively use $\sigma_V$ to denote the random signed sum $\epsilon_1 v_1+\dots +\epsilon_n v_n$.  Although we will ultimately only work in $\RR^2$, we state our lemmas here in terms of $\RR^d$ in general since this introduces no extra complications in our argument.
	
	To prove concentration of the random signed sum $\sigma_V \in \RR^d$ around the origin, it is natural to try and apply the second moment method (and Chebyshev's inequality in particular).  As was formally worked out in~\cite{CC}, this approach easily shows that $\sigma_V$ has a constant probability of landing within a ball of radius roughly $\sqrt{n}$, after which a pigeonholing argument implies that there is \textit{some} ball of constant radius in which $\sigma_V$ lands with probability $\Omega(n^{-d/2})$. However, there is no guarantee with this approach that this constant-radius ball is centered at the origin, and all of what we do in the sequel is aimed at circumventing this obstacle.
	
	We get around the obstacle described above by relating concentration estimates for $\sigma_V$ to concentration estimates for the difference of \emph{two independent copies} of $\sigma_V$. This reduction ultimately yields the following pairing lemma, the proof of which will be the main goal of this section, establishing sufficiently strong concentration for the random signed sum $\sigma_V$ provided one can find a reordering of our vectors $V = (v_1, \dots, v_n)$ such that the norms of the consecutive differences $v_{2i-1}-v_{2i}$ are small.
	\begin{prop}\label{prop:pairing}
		Let $V=(v_1,\ldots,v_n)$ be a sequence of unit vectors in $\RR^d$ with $n$ even and $r,\alpha>0$ reals such that $r^2\ge \alpha+\sum_{i=1}^{n/2} \|v_{2i-1}-v_{2i}\|^2$.  Then
		\[\Pr[\|\sigma_V\|\le r]=\Omega_{d,\alpha,r}\left(n^{-d/2}\right).\]
	\end{prop}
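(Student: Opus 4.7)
The plan is to combine a swap coupling, which uses the pairing hypothesis to produce an identically distributed companion of $\sigma_V$ close to it in $L^2$, with the standard second-moment plus pigeonhole approach to the reverse Littlewood--Offord problem.

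First, let $\sigma^{\mathrm{sw}}:=\sum_{i=1}^{n/2}(\epsilon_{2i}v_{2i-1}+\epsilon_{2i-1}v_{2i})$ be the random vector obtained from $\sigma_V$ by exchanging $\epsilon_{2i-1}$ and $\epsilon_{2i}$ within each pair. Since this swap is a measure-preserving involution on Rademacher sequences, $\sigma^{\mathrm{sw}}$ is identically distributed with $\sigma_V$; and the identity $\sigma_V-\sigma^{\mathrm{sw}}=\sum_i(\epsilon_{2i-1}-\epsilon_{2i})(v_{2i-1}-v_{2i})$ together with the pairing hypothesis yields $\mathbb{E}\|\sigma_V-\sigma^{\mathrm{sw}}\|^2 = 2\sum_i\|v_{2i-1}-v_{2i}\|^2 \le 2(r^2-\alpha)$. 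Conditioning on the matching-pair set $J:=\{i:\epsilon_{2i-1}=\epsilon_{2i}\}$ (in which each $i$ lies independently with probability $1/2$) and setting $\delta_i:=\epsilon_{2i-1}$, a direct check gives $\sigma_V+\sigma^{\mathrm{sw}}=2W_J$ and $\sigma_V-\sigma^{\mathrm{sw}}=2U_J$, where
\[
W_J:=\sum_{i\in J}\delta_i(v_{2i-1}+v_{2i}),\qquad U_J:=\sum_{i\notin J}\delta_i(v_{2i-1}-v_{2i}),
\]
and $W_J$ and $U_J$ are \emph{conditionally independent} Rademacher sums given $J$. Since $\sigma_V=W_J+U_J$, applying the triangle inequality pointwise and then conditional independence gives
\[
\Pr[\|\sigma_V\|\le r]\;\ge\;\mathbb{E}_J\!\Bigl[\Pr[\|W_J\|\le s\mid J]\cdot\Pr[\|U_J\|\le r-s\mid J]\Bigr]
\]
for any $s\in[0,r]$.

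Choosing $s$ so that $r-s$ is a sufficiently large constant multiple of $\sqrt{r^2-\alpha}$, Markov's inequality applied to $\mathbb{E}[\|U_J\|^2\mid J]\le\sum_{i\notin J}\|v_{2i-1}-v_{2i}\|^2\le r^2-\alpha$ yields $\Pr[\|U_J\|\le r-s\mid J]=\Omega_{\alpha,r}(1)$ uniformly in $J$. It thus suffices to show $\mathbb{E}_J\Pr[\|W_J\|\le s\mid J]=\Omega(n^{-d/2})$: this reduces the pairing lemma to a reverse Littlewood--Offord estimate for the Rademacher sum $W_J$ of $|J|\asymp n/2$ vectors $w_i:=v_{2i-1}+v_{2i}$, each of norm at most $2$. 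Here I would apply the second-moment + pigeonhole argument: since $\mathbb{E}[\|W_J\|^2\mid J]=O(n)$, $W_J$ lies in a ball of radius $O(\sqrt n)$ about the origin with constant probability, and covering that ball with $O(n^{d/2})$ cells of constant radius together with the Cauchy--Schwarz bound $\sum_j q_j^2\ge(\sum_j q_j)^2/N$ on the cell probabilities gives $\Pr[\|W_J-W_J'\|\le R\mid J]=\Omega(n^{-d/2})$ for two independent copies $W_J, W_J'$ and some constant $R$.

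The main obstacle to finalizing the proof is the last transfer step, namely converting this pigeonhole estimate for the \emph{difference} $W_J-W_J'$ into the analogous estimate for $W_J$ itself --- equivalently, ensuring the pigeonhole ball for $W_J$ can be taken centered near the origin rather than at some unknown point. I expect this to be resolved by another application of the swap coupling (now to $W_J$) or by a symmetrization argument that exploits the joint structure linking $W_J$ back to $\sigma_V$ via the pairing hypothesis, thereby pinning the center of concentration at the origin.
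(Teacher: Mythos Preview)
Your decomposition $\sigma_V=W_J+U_J$ with $W_J=\sum_{i\in J}\delta_i(v_{2i-1}+v_{2i})$ and $U_J=\sum_{i\notin J}\delta_i(v_{2i-1}-v_{2i})$ is correct, the conditional independence holds, and the Markov bound on $U_J$ works (minor quibble: $r-s$ cannot be a ``large multiple'' of $\sqrt{r^2-\alpha}$ since $r-s<r$, but any choice like $r-s=\tfrac12(r+\sqrt{r^2-\alpha})$ gives the $\Omega_{\alpha,r}(1)$ you need). The gap you flag at the end is genuine, however, and your proposed fixes do not work as written: iterating the swap coupling on $W_J$ would require a pairing hypothesis on the sums $w_i=v_{2i-1}+v_{2i}$, which you do not have, and a bare symmetrization does not center the pigeonhole ball.

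The missing observation---and this is exactly the paper's device---is that $W_J$, marginalized over $J$, is \emph{already} the difference of two independent copies of a Rademacher sum, so no transfer step is needed. Indeed, sample two independent Rademacher sequences $(\eta_i),(\eta_i')$ and set $I=\{i:\eta_i=\eta_i'\}$; then
\[
\sigma^{(1)}-\sigma^{(2)}:=\sum_{i=1}^{n/2}(\eta_i-\eta_i')\cdot\tfrac12(v_{2i-1}+v_{2i})=\sum_{i\notin I}\eta_i(v_{2i-1}+v_{2i}),
\]
which has the same distribution as your $W_J$ (with $I^c$ playing the role of $J$), and the pair $(\sigma^{(1)}-\sigma^{(2)},\,I)$ is jointly distributed as $(W_J,J)$. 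Written this way the left side is literally $X-X'$ for two i.i.d.\ copies $X,X'$ of $\sigma_{\overline V}=\sum_i\eta_i\cdot\tfrac12(v_{2i-1}+v_{2i})$, and your own pigeonhole argument gives $\Pr[\|W_J\|\le s]=\Pr[\|X-X'\|\le s]=\Omega_{d,s}(n^{-d/2})$ directly. Plugging this into your display (after pulling the uniform lower bound on $\Pr[\|U_J\|\le r-s\mid J]$ outside the expectation over $J$) completes the proof. With this one change of viewpoint your argument coincides with the paper's.
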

	
	Before we can prove Proposition~\ref{prop:pairing}, we require a few definitions.  Given a sequence of vectors $V=(v_1,\ldots,v_n)$, we define its sequence of difference vectors $\delta(V)$ by \[\delta(V)=(v_1-v_2,v_3-v_4,\ldots,v_{2\lfloor n/2\rfloor-1}- v_{2\lfloor n/2\rfloor}),\]
	i.e., $\delta(V)$  consists of all the differences $v_{2i-1}-v_{2i}$ for $1\le i\le \lfloor n/2\rfloor$.
	For a real number $a>0$ and a sequence of vectors $V$, we define
	\[p_a(V)=\Pr[\|\sigma_V\|\le a]\]
	and we define $q_a(V)$ to be the probability that two independent samples $X,X'$ of $\sigma_V$ satisfy $\|X - X'\| \le a$. The key ingredient for proving Proposition~\ref{prop:pairing} is the following lemma.
	
	\begin{lemma}\label{lem:iterate}
		Let $V=(v_1,\ldots, v_n)$ be a sequence of vectors in $\RR^d$, let $a,b>0$ be reals, and set
		\[\overline{V} = \left(\frac{1}{2}(v_{2i-1} + v_{2i})\right)_{i=1}^{\lfloor n/2\rfloor}.\]  If $n$ is even, then
		$$
		p_{a+b}(V) \ge q_a(\overline{V}) \cdot \min_{D\subseteq \delta(V)} p_{b}(D),
		$$
		where $D$ ranges over all subsequences of $\delta(V)$.  If $n$ is odd and if every vector of $V$ has norm at most $K$, then
		$$
		p_{a+b+K}(V) \ge q_a(\overline{V}) \cdot \min_{D\subseteq \delta(V)} p_{b}(D).
		$$
	\end{lemma}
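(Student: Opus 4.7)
My plan is to reparametrize the Rademacher signs pair by pair. Writing $\eta_i = \epsilon_{2i-1}$ and $s_i = \epsilon_{2i-1}\epsilon_{2i}$ gives two independent Rademacher variables per pair, and
\[\epsilon_{2i-1}v_{2i-1} + \epsilon_{2i}v_{2i} = \eta_i(v_{2i-1}+v_{2i}) \text{ if } s_i=1, \qquad \eta_i(v_{2i-1}-v_{2i}) \text{ if } s_i = -1.\]
Setting $S = \{i : s_i = 1\}$ and $T = \{i : s_i = -1\}$, this decomposes $\sigma_V = A + B$, where $A = \sum_{i \in S}\eta_i(v_{2i-1}+v_{2i})$ and $B = \sum_{i\in T}\eta_i(v_{2i-1}-v_{2i})$. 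The crucial point is that $A$ and $B$ depend on disjoint subsets of the $\eta_j$'s, so conditional on $(s_i)$ they are independent.

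Next, since $\{\|A\|\le a\}\cap\{\|B\|\le b\} \subseteq \{\|\sigma_V\|\le a+b\}$ by the triangle inequality, conditioning on $(s_i)$ and using this conditional independence gives
\[p_{a+b}(V) \ge \EE\bigl[\Pr[\|A\|\le a \mid (s_i)]\cdot \Pr[\|B\|\le b\mid (s_i)]\bigr].\]
Given $(s_i)$, the sum $B$ is a random signed sum of $(v_{2i-1}-v_{2i})_{i\in T}$, which is a subsequence of $\delta(V)$, so $\Pr[\|B\|\le b\mid (s_i)] \ge \min_{D\subseteq\delta(V)} p_b(D)$. Pulling this minimum out of the expectation and unconditioning reduces matters to identifying the distribution of $\|A\|$.

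The key step is showing that $\|A\|$ has the same distribution as $\|X-X'\|$ for independent $X,X' \sim \sigma_{\overline V}$, which gives $\Pr[\|A\|\le a] = q_a(\overline V)$. Writing $X - X' = \sum_i(\eta'_i - \eta''_i)\overline{v}_i$ and decomposing $\eta'_i - \eta''_i = 2T_i\xi_i$, where $T_i \sim \text{Bernoulli}(1/2)$ indicates $\eta'_i \ne \eta''_i$ and $\xi_i$ is an independent Rademacher, the factors of $\tfrac{1}{2}$ in $\overline{v}_i$ cancel to give
\[X - X' = \sum_{i : T_i = 1}\xi_i (v_{2i-1}+v_{2i}),\]
which matches the distribution of $A$ since $\{i : T_i = 1\}$ is uniform over subsets of $[n/2]$ exactly like $S$. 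This completes the even case. For $n$ odd, the plan is to apply the even case to $V' = (v_1,\dots,v_{n-1})$; since $\sigma_V = \sigma_{V'} + \epsilon_n v_n$ and $\|v_n\|\le K$, the triangle inequality gives $\{\|\sigma_{V'}\| \le a+b\} \subseteq \{\|\sigma_V\|\le a+b+K\}$, yielding the odd case.

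The main conceptual hurdle is spotting the reparametrization $s_i = \epsilon_{2i-1}\epsilon_{2i}$, which diagonalizes each pair into independent ``add or subtract'' and ``pick a sign'' components, thereby exposing the $X - X'$ structure hidden inside $\sigma_V$; once this is in hand, the rest is a direct Rademacher difference computation together with the triangle inequality.
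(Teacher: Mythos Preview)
Your proof is correct and is essentially the same argument as the paper's, just told in the reverse direction. The paper builds $\sigma_V$ from scratch as $\sigma^{(1)}-\sigma^{(2)}+\sigma^{(3)}$, where $\sigma^{(1)},\sigma^{(2)}$ are two independent copies of $\sigma_{\overline V}$ and $\sigma^{(3)}$ is a signed sum over the random set $I=\{i:\eps_i^{(1)}=\eps_i^{(2)}\}$ of the difference vectors; it then conditions on $I$, pulls out the minimum over $D\subseteq\delta(V)$, and unconditions to recover $q_a(\overline V)$. Your reparametrization $(\eta_i,s_i)=(\eps_{2i-1},\eps_{2i-1}\eps_{2i})$ produces exactly the same decomposition: your $S$ plays the role of the paper's $I^c$, your $A$ is distributionally their $\sigma^{(1)}-\sigma^{(2)}$, and your $B$ is their $\sigma^{(3)}$. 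The only cosmetic difference is that you start from $\sigma_V$ and then identify $A\stackrel{d}{=}X-X'$ a posteriori, whereas the paper starts from $X,X'$ and verifies the composite equals $\sigma_V$ in law; both routes hinge on the same observation that the four signed sums $\pm v_{2i-1}\pm v_{2i}$ split evenly into ``sum'' and ``difference'' types.
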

	\begin{proof}
		The odd case follows immediately from the even case since the probability that $\|\sigma_V\|\le c+ \max_{i}\|v_i\|$ is always at most the probability that $\|\sigma_{V-\{v_n\}}\|\le c$, so we assume in what follows that $n$ is even.
		
		We sample a random signed sum $\sum \eps_i v_i$ as follows. First, sample i.i.d.\  uniform random signs $\eps_i^{(1)}$, $\eps_i^{(2)}$ for all $i\le n/2$. Define $I$ to be the set of $i\le n/2$ for which $\eps_i^{(1)} = \eps_i^{(2)}$, and sample i.i.d.\  uniform random signs $\eps_i^{(3)}$ for each $i\in I$. Define
		\begin{align*}
			\sigma^{(1)} & = \sum_{i=1}^{n/2} \eps_i^{(1)} \cdot \frac{1}{2}(v_{2i-1} + v_{2i}) \\
			\sigma^{(2)} & = \sum_{i=1}^{n/2} \eps_i^{(2)} \cdot \frac{1}{2}(v_{2i-1} + v_{2i}) \\
			\sigma^{(3)} & = \sum_{i \in I} \eps_i^{(3)} \cdot (v_{2i-1} - v_{2i}).
		\end{align*}
		Observe that $\sigma^{(1)} - \sigma^{(2)}+\sigma^{(3)}$ has the same distribution as $\sigma_V$; this is easy to verify by checking  that each of the four possible signed sums of $v_{2i-1},v_{2i}$ are equally likely for every $i$. Thus,
		\begin{align*}
			p_{a+b}(V) & = \Pr[\|\sigma^{(1)} - \sigma^{(2)}+\sigma^{(3)}\| \le a+b]                                                                                  \\
			& =\EE_{D\subseteq \delta(V)} \left[ \Pr[\|\sigma^{(1)} - \sigma^{(2)}+\sigma^{(3)}\| \le a+b \,\mid\, I=D]\right]                             \\
			& \ge \EE_{D\subseteq \delta(V)}\left[\Pr[\|\sigma^{(1)}-\sigma^{(2)}\|\le a\,\mid\,I=D]\cdot \Pr[\|\sigma^{(3)}\| \le b \,\mid\, I=D] \right] \\
			& \ge \Pr[\|\sigma^{(1)}-\sigma^{(2)}\| \le a] \cdot \min_{D\subseteq \delta(V)}\Pr[\|\sigma^{(3)}\| \le b \,\mid\, I=D]                       \\
			& = q_a(\overline{V}) \cdot \min_{D\subseteq \delta(V)} p_{b}(D),
		\end{align*}
		as desired.
	\end{proof}
	
	To get good bounds from Lemma~\ref{lem:iterate}, it then suffices to establish good lower bounds on $q_a$, and to show that the sequence $V$ can be reordered in such a way that the vectors of $\delta(V)$ are small. The former is a straightforward consequence of Chebyshev's inequality.
	
	\begin{lemma}\label{lem:Chebyshev}
		If $W = (w_1,\ldots, w_n)$ is a sequence of $n$ vectors in $\RR^d$ of norm at most $K$, then $q_a(W) = \Omega_{d,a,K}( n^{-d/2})$ for any $a > 0$.
	\end{lemma}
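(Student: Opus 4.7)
My plan is to combine the second-moment method with a pigeonhole/Cauchy--Schwarz argument, using Cauchy--Schwarz (rather than extracting a single heavy cube) to achieve the correct $n^{-d/2}$ dependence. Write $X := \sigma_W$ and let $X'$ be an independent copy. I would tile $\RR^d$ with axis-aligned cubes $(C_i)_{i\ge 1}$ of common side length $a/\sqrt{d}$, so that each cube has diameter exactly $a$. Setting $p_i := \Pr[X\in C_i]$ and using independence of $X,X'$, whenever $X$ and $X'$ land in the same cube one has $\|X-X'\|\le a$, so
\[q_a(W) \;\ge\; \sum_{i}\Pr[X\in C_i]\cdot \Pr[X'\in C_i] \;=\; \sum_i p_i^2.\]
It therefore suffices to show $\sum_i p_i^2 = \Omega_{d,a,K}(n^{-d/2})$.

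For this, I would invoke the second-moment bound $\EE[\|X\|^2] = \sum_{j=1}^n \|w_j\|^2 \le nK^2$, from which Markov's inequality yields $\Pr\bigl[\|X\|\le \sqrt{2n}\,K\bigr]\ge 1/2$. Let $\mathcal{I}$ be the set of indices $i$ for which $C_i$ meets the ball $B\bigl(0,\sqrt{2n}\,K\bigr)$; a routine volume comparison gives $|\mathcal{I}| \le N$ with $N = O_{d,a,K}(n^{d/2})$, and by construction $\sum_{i\in\mathcal{I}} p_i \ge 1/2$. Applying Cauchy--Schwarz to this restricted sum,
\[\sum_i p_i^2 \;\ge\; \sum_{i\in\mathcal{I}} p_i^2 \;\ge\; \frac{1}{|\mathcal{I}|}\Bigl(\sum_{i\in\mathcal{I}} p_i\Bigr)^{\!2} \;\ge\; \frac{1}{4N} \;=\; \Omega_{d,a,K}(n^{-d/2}),\]
which combined with the previous display gives the claim.

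There is no serious obstacle in this argument, but one subtle point is worth flagging: the cruder alternative of extracting a single heaviest cube $i^*\in\mathcal{I}$ and bounding $\sum_i p_i^2 \ge p_{i^*}^2 \ge (1/(2N))^2$ would only yield $q_a(W)=\Omega(n^{-d})$, which is off by a factor of $n^{d/2}$. Aggregating the contributions of \emph{all} typical cubes via Cauchy--Schwarz is precisely what recovers the correct exponent, matching the heuristic that in its typical region $X$ resembles a roughly uniform distribution on a ball of radius $O(\sqrt{n})$ and therefore has density of order $n^{-d/2}$.
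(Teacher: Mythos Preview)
Your proof is correct and follows essentially the same approach as the paper's: use a second-moment bound to confine $\sigma_W$ to a region of radius $O_K(\sqrt{n})$ with constant probability, cover that region by $O_{d,a,K}(n^{d/2})$ subcubes of diameter $a$, and lower-bound the collision probability $\sum_i p_i^2$ via Cauchy--Schwarz. The only cosmetic difference is that the paper applies Chebyshev coordinatewise (with a union bound) to land in a cube, whereas you apply Markov to $\|X\|^2$ to land in a ball; your write-up is in fact more explicit about the Cauchy--Schwarz step, which the paper leaves implicit.
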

	\begin{proof}
		For each coordinate $1 \le i \le d$, we have $\mathrm{Var}[(\sigma_W)_i] \le K^2n$, so by Chebyshev's inequality,
		we have
		\[
		\Pr[|(\sigma_W)_i| < 2dK\sqrt{n}] > 1-  \frac{1}{2d}.
		\]
		By taking a union bound over all $d$ dimensions, we obtain
		$$
		\Pr[\|\sigma_W\|_\infty < 2dK \sqrt{n}] > \frac{1}{2}.
		$$
		Thus there is at least a $1/2$ chance that $\sigma_V$ falls into a cube centered at the origin with side length $4dK\sqrt{n}$. Such a cube can be covered by $O_d((K\sqrt{n}/a)^d)$ subcubes of diameter $a$, so we see that if $\sigma_W$ and $\sigma_{W'}$ are independently sampled from the same distribution, the chance that they both fall into the same subcube is at least $\Omega_d( (a/K\sqrt{n})^d)$, as desired.
	\end{proof}
	Since the vectors of $\overline{W}=(\frac{1}{2}(w_1+w_2),\ldots)$ have norms no larger than the maximum norm of $W$, we immediately get the following corollary.
	\begin{cor}\label{cor:Chebyshev}
		If $W = (w_1,\ldots, w_n)$ is a sequence of $n$ vectors in $\RR^d$ of norm at most $K$, then $q_a(\overline{W}) = \Omega_{d,a,K}(n^{-d/2})$ for any $a > 0$.  \qed
	\end{cor}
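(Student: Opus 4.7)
The plan is to deduce this directly from Lemma~\ref{lem:Chebyshev} by verifying that $\overline{W}$ itself satisfies the hypothesis of that lemma with the same parameters $d$ and $K$. The entire content of the argument is the triangle-inequality bound that the text already flags in the sentence just before the statement of the corollary.

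Concretely, each entry of $\overline{W}$ has the form $\tfrac{1}{2}(w_{2i-1}+w_{2i})$, so by the triangle inequality its Euclidean norm is at most $\tfrac{1}{2}(\|w_{2i-1}\|+\|w_{2i}\|) \le K$. Hence $\overline{W}$ is a sequence of $\lfloor n/2\rfloor$ vectors in $\RR^d$, each of norm at most $K$. Applying Lemma~\ref{lem:Chebyshev} to $\overline{W}$ (whose length is $\lfloor n/2\rfloor$) then yields $q_a(\overline{W}) = \Omega_{d,a,K}(\lfloor n/2\rfloor^{-d/2})$, and since $\lfloor n/2\rfloor \ge n/3$ for all $n\ge 2$, this is $\Omega_{d,a,K}(n^{-d/2})$, as claimed.

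There is essentially no obstacle to overcome here: the corollary is recorded separately only because the quantity $q_a(\overline{W})$, rather than $q_a(W)$, is exactly what will be fed into Lemma~\ref{lem:iterate} when proving Proposition~\ref{prop:pairing}. The only mild subtlety worth noting is that passing from $W$ to $\overline{W}$ halves the length of the sequence, but since the asymptotic bound only depends on $n$ up to a constant factor depending on $d$, $a$, and $K$, this is harmless.
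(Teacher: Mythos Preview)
Your proof is correct and matches the paper's approach exactly: the paper simply records, in the sentence preceding the corollary, that the averaged vectors have norm at most $K$ and then invokes Lemma~\ref{lem:Chebyshev}. One tiny quibble: the inequality you cite, $\lfloor n/2\rfloor \ge n/3$, points the wrong way for what you need---what actually makes $\Omega_{d,a,K}(\lfloor n/2\rfloor^{-d/2})$ into $\Omega_{d,a,K}(n^{-d/2})$ is the trivial bound $\lfloor n/2\rfloor \le n$ (so halving the sequence only \emph{improves} the lower bound)---but this does not affect the correctness of your argument.
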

	
	Similarly, once we know that the vectors of a difference sequence $\delta(V)$ are small on average, we can apply the following with $\delta(V)=W$.
	\begin{lemma}\label{lem:smallV}
		If $W=(w_1,\ldots,w_n)$ is a sequence of vectors in $\RR^d$ and $b,c>0$ are real numbers such that $\sum_{i=1}^n \|w_i\|^2\le b^2-c$, then for any subsequence $D$ of $W$ we have $p_b(D)\ge c/b^2$.
	\end{lemma}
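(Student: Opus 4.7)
The plan is to deduce the bound as a clean consequence of the second moment computation together with Markov's inequality applied to the non-negative random variable $\|\sigma_D\|^2$.

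First I would compute $\EE[\|\sigma_D\|^2]$ directly. Writing $D=(w_{i_1},\ldots,w_{i_m})$, we have
\[
\|\sigma_D\|^2 = \sum_{j=1}^m \|w_{i_j}\|^2 + \sum_{j\ne k} \eps_{j}\eps_{k}\langle w_{i_j},w_{i_k}\rangle,
\]
and since the $\eps_{j}$ are independent Rademacher variables, the cross terms vanish in expectation. Hence
\[
\EE[\|\sigma_D\|^2] = \sum_{j=1}^m \|w_{i_j}\|^2 \le \sum_{i=1}^n \|w_i\|^2 \le b^2-c,
\]
where the middle inequality uses that $D$ is a subsequence of $W$ (so we drop only non-negative terms) and the final inequality is the hypothesis.

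Next I would apply Markov's inequality to $\|\sigma_D\|^2$, which is non-negative. This gives
\[
\Pr\bigl[\|\sigma_D\|^2 > b^2\bigr] \le \frac{\EE[\|\sigma_D\|^2]}{b^2} \le \frac{b^2-c}{b^2} = 1-\frac{c}{b^2},
\]
so taking complements,
\[
p_b(D) = \Pr\bigl[\|\sigma_D\|\le b\bigr] = \Pr\bigl[\|\sigma_D\|^2 \le b^2\bigr] \ge \frac{c}{b^2},
\]
which is the claimed inequality. There is no real obstacle here: the only ingredients are the orthogonality of Rademacher cross terms in the second moment and one application of Markov. The monotonicity step $\sum_{w\in D}\|w\|^2 \le \sum_{i=1}^n \|w_i\|^2$ is what lets the statement be uniform over all subsequences $D$, which is exactly the flexibility needed when this lemma is later combined with Lemma~\ref{lem:iterate}.
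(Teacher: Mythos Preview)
Your proof is correct and follows essentially the same approach as the paper: compute $\EE[\|\sigma_D\|^2]=\sum_{w\in D}\|w\|^2\le b^2-c$ via orthogonality of the Rademacher cross terms, then apply Markov's inequality to $\|\sigma_D\|^2$. The only cosmetic difference is that the paper writes the second-moment expansion via the inner product $\langle \sigma_D,\sigma_D\rangle$ rather than separating diagonal and off-diagonal terms explicitly.
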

	\begin{proof}
		Observe that
		\begin{align*}
			\EE[\|\sigma_D\|^2] & = \EE\left[\left\langle \sum_{u\in D} \eps_u u,\sum_{u\in D} \eps_u u \right\rangle \right]=\EE\left[\sum_{u,u'\in D}\eps_u \eps_{u'} \langle u,u'\rangle\right] \\
			& =\sum_{u\in D} \|u\|^2\le \sum_{i=1}^{n}\|w_{i}\|^2\le b^2-c,
		\end{align*}
		so by Markov's inequality, we find
		$$\Pr[\|\sigma_D\| \le b] = \Pr[\|\sigma_D\|^2 \le b^2] \ge 1-\frac{b^2-c}{b^2}=\frac{c}{b^2},$$
		giving the result.
	\end{proof}
	
	We now have all we need to prove Proposition~\ref{prop:pairing}.
	\begin{proof}[Proof of Proposition~\ref{prop:pairing}]
		Applying Lemma~\ref{lem:iterate} with $a=\alpha/3r$ and $b=r-a$ (which is positive since $r^2\ge \alpha>\alpha/3=ar$) gives
		\[p_r(V)\ge q_{a}(\overline{V}) \cdot \min_{D\sub \delta(V)} p_{r-a}(D).\]
		By Corollary~\ref{cor:Chebyshev} and the fact that $V$ consists of unit vectors, we get \[q_a(\overline{V})=\Omega_{d,a}(n^{-d/2})=\Omega_{d,\alpha,r}(n^{-d/2}).\]  For the second term, we apply Lemma~\ref{lem:smallV} with $b=r-a$ and $c=ra$ (which is valid since $(r-a)^2-ra\ge r^2-3ra=r^2-\alpha$) to get
		\[p_{r-a}(D)\ge ra/(r-a)^2\ge \alpha/3r^2\] for all $D$.  Putting these two estimates together gives the result.
	\end{proof}
	
	\section{Proof of the main result}\label{sec:2d}
	As noted earlier, the main obstacle in applying Lemma~\ref{lem:iterate} to show concentration of $\sigma_V$ is to find an effective reordering of $V$ so that the sequence $\delta(V)=(v_1-v_2,\ldots)$ has small norms.  In the two-dimensional case, there is a natural way to do this, namely by ordering the vectors by argument as they are arranged  around the unit circle.  For this, given a unit vector $v$ in $\mathbb{R}^2$, we write $\arg(v)$ to denote the unique $\theta$ with $0\le \theta<2\pi$ such that $v=(\cos(\theta),\sin(\theta))$.
	\begin{lemma}\label{lem:circleOrder}
		Let $V=(v_1,\ldots,v_{n+1})$ be unit vectors in $\RR^2$ with $v_1=(1,0)$ and $v_{n+1}=(-1,0)$ such that $0=\arg(v_1)\le \arg(v_2)\le \cdots \le \arg(v_{n+1})=\pi$.  Then $\sum_{i=1}^n \|v_i-v_{i+1}\|^2\le 4$.
	\end{lemma}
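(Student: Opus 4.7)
The plan is to prove this by an incremental insertion argument. Start with the two-element subsequence $(v_1, v_{n+1}) = ((1,0), (-1,0))$, for which the sum of squared consecutive differences equals $\|v_1 - v_{n+1}\|^2 = 4$. Then insert $v_2, \ldots, v_n$ one at a time, each placed between its two current neighbors in argument order, and show that every insertion only weakly decreases the total sum.

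The key local step to establish is: if $A, B, C$ are unit vectors in $\mathbb{R}^2$ with $\arg A \le \arg C \le \arg B$ and $\arg B - \arg A \le \pi$, then
\[\|A - C\|^2 + \|C - B\|^2 \le \|A - B\|^2.\]
By the law of cosines, this is equivalent to the angle $\angle ACB$ being at least $\pi/2$. This in turn follows from the inscribed angle theorem: since $C$ lies on a minor arc between $A$ and $B$ of length at most $\pi$, the inscribed angle at $C$ subtending the chord $AB$ equals $\pi$ minus half the central angle of that arc, hence is at least $\pi/2$. (The extremal case $\arg B - \arg A = \pi$ is exactly Thales' theorem.) Alternatively, using $\|u - u'\|^2 = 4\sin^2((\arg u - \arg u')/2)$ and a product-to-sum identity reduces the inequality to $\cos((\arg B - \arg A)/2) \ge 0$, which is immediate from the hypothesis.

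Applying the local step at each insertion: since all vectors lie on the upper semicircle, the two current neighbors of the newly inserted $v_k$ differ in argument by at most $\pi$, so the hypothesis is satisfied and the sum of squared consecutive differences does not increase. After all $n-1$ insertions, the sum is still at most $4$, as required.

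I don't anticipate any real obstacle here; the whole argument reduces to the single local inequality, which is a one-line consequence of the inscribed angle theorem and the law of cosines (or, equivalently, a short trigonometric manipulation). The only point worth checking is that inserting in argument order always places the new vector on the short arc between its two current neighbors, and this is automatic from the fact that all vectors lie on a semicircle.
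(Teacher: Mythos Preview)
Your proposal is correct and is essentially the same argument as the paper's: the paper proves the generalized Lemma~\ref{lem:generalizedCircleOrder} by iteratively \emph{removing} intermediate points using the same local inequality $\|A-C\|^2+\|C-B\|^2\le\|A-B\|^2$ (justified by the angle at $C$ being obtuse or right, since all points lie on a semicircle), whereas you run the same process in reverse by \emph{inserting} points. The only cosmetic difference is direction; the key local step and its justification via the inscribed-angle/law-of-cosines observation are identical.
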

	In fact, we will need the following generalization which recovers Lemma~\ref{lem:circleOrder} by taking $V'=((1,0),(-1,0))$.
	\begin{lemma}\label{lem:generalizedCircleOrder}
		Let $V'=(v_1',\ldots,v_{m}')$ be unit vectors in $\RR^2$ with $0=\arg(v_1')\le \arg(v_2')\le \cdots \le \arg(v_m')\le \pi$.  If $V=(v_1,\ldots,v_{n+1})$ is a sequence of unit vectors in $\RR^2$ which contains $V'$ as a subsequence and which satisfies $0=\arg(v_1)\le \arg(v_2)\le \cdots \le \arg(v_{n+1})=\arg(v'_m)$, then \[\sum_{i=1}^n \|v_i-v_{i+1}\|^2\le \sum_{i=1}^{m-1} \|v_i'-v_{i+1}'\|^2.\]
	\end{lemma}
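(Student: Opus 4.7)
The plan is to reduce the claim to a one-variable trigonometric superadditivity inequality. The starting observation is that for unit vectors $u,w \in \RR^2$ with $0 \le \arg(w) - \arg(u) \le \pi$,
\[
\|u-w\|^2 \;=\; 2 - 2\cos(\arg(w)-\arg(u)) \;=\; 4\sin^2\!\left(\frac{\arg(w)-\arg(u)}{2}\right).
\]
Setting $\theta_i = \arg(v_{i+1}) - \arg(v_i) \ge 0$ for $1 \le i \le n$, the hypothesis that $V'$ occurs as a subsequence of $V$ lets us partition $\{1,\ldots,n\}$ into consecutive intervals $I_1,\ldots,I_{m-1}$ for which the $j$-th gap of $V'$ satisfies $\arg(v'_{j+1}) - \arg(v'_j) = \sum_{i \in I_j}\theta_i$. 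By hypothesis all of these quantities, individually and in total, lie in $[0,\pi]$.

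Under this dictionary, the lemma reduces (after dividing through by $4$ and summing over $j$) to the following statement: for nonnegative reals $\theta_1,\ldots,\theta_k$ with $\theta_1 + \cdots + \theta_k \le \pi$,
\[
\sum_{i=1}^k \sin^2\!\left(\frac{\theta_i}{2}\right) \;\le\; \sin^2\!\left(\frac{\theta_1 + \cdots + \theta_k}{2}\right).
\]
A routine induction on $k$ reduces this to the base case $k=2$, applied iteratively to the two leftmost summands.

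The binary inequality $\sin^2(\alpha/2) + \sin^2(\beta/2) \le \sin^2((\alpha+\beta)/2)$, valid for $\alpha,\beta \ge 0$ with $\alpha+\beta \le \pi$, is the crux of the argument and the one place where the hemispherical constraint $\arg(v'_m) \le \pi$ really enters. I would verify it using $\sin^2 x = (1-\cos 2x)/2$ together with the cosine addition formula: a short rearrangement reduces the inequality to $(1-\cos\alpha)(1-\cos\beta) \le \sin\alpha\sin\beta$, which in half-angle form is simply $\tan(\alpha/2)\tan(\beta/2) \le 1$. Since $\alpha/2 + \beta/2 \le \pi/2$, we have $\tan(\beta/2) \le \cot(\alpha/2)$, and the base case is done. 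The main obstacle, if any, is conceptual rather than computational: one must notice that the inequality fails once $\alpha + \beta$ exceeds $\pi$ (for example, taking $\alpha = \beta$ slightly larger than $\pi/2$), which is exactly why the statement of the lemma needs the endpoint restriction $\arg(v'_m) \le \pi$.
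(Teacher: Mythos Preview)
Your argument is correct and follows the same overall structure as the paper: both reduce to the binary inequality (removing a single intermediate point can only increase the sum) and then iterate. The paper's justification of that binary step is a one-line geometric observation---since all the $v_i$ lie on a semicircle, the inscribed angle at $v_{i+1}$ in the triangle $v_i v_{i+1} v_{i+2}$ is obtuse or right, whence $\|v_i-v_{i+1}\|^2+\|v_{i+1}-v_{i+2}\|^2\le \|v_i-v_{i+2}\|^2$ by the law of cosines---whereas you unwind the same fact into the trigonometric superadditivity $\sin^2(\alpha/2)+\sin^2(\beta/2)\le \sin^2((\alpha+\beta)/2)$ and verify it via $\tan(\alpha/2)\tan(\beta/2)\le 1$. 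The two are equivalent; the geometric version is shorter, while yours makes the role of the constraint $\alpha+\beta\le\pi$ explicit.
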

	\begin{proof}
		Observe that the points $v_i$ all lie in the semicircle $0\le \arg(v_i) \le \pi$, so in any triangle $v_iv_{i+1}v_{i+2}$ the angle at $v_{i+1}$ is either obtuse or right. Thus, \[\|v_i-v_{i+1}\|^2 + \|v_{i+1}-v_{i+2}\|^2 \le \|v_i-v_{i+2}\|^2.\] The result then follows by iteratively removing the terms that are in $V$ but not $V'$ (since the inequality above implies that this procedure can never decrease the sum $\sum \|v_i-v_{i+1}\|^2$).
	\end{proof}

	With Lemma~\ref{lem:circleOrder}, we can already give a short proof of a slightly weakened version of Theorem~\ref{thm:main} when $n$ is even.  We emphasize that this result is not needed for our main argument, but it does serve as a warm-up to our more general approach that necessitates some more careful geometric estimates.
	
	\begin{prop}\label{prop:weak2}
		For any $r>\sqrt{2}$, there exists an absolute constant $c=c(r) > 0$ such that if $V=(v_1,\ldots,v_n)$ is a sequence of unit vectors in $\mathbb{R}^2$ with $n$ even, then \[\Pr[\|\sigma_V\| \le r] \ge \frac{c}{n}.\]
	\end{prop}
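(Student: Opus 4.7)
The plan is to invoke Proposition~\ref{prop:pairing} with $\alpha = r^2 - 2 > 0$ after choosing signs and an ordering of the $v_j$ so that the pair sum $\sum_{i=1}^{n/2}\|v_{2i-1}-v_{2i}\|^2$ is at most~$2$.

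To find such a configuration, I would view each vector $v_j$ as a point $\theta_j := \arg(v_j) \bmod \pi$ on the projective circle $\mathbb{RP}^1$ of circumference~$\pi$, sort these $n$ points, and record the $n$ cyclic gaps $g_1, \dots, g_n \ge 0$ between them (so $\sum_j g_j = \pi$). For any index $k$, by suitable sign flips one may assume the vectors lie in the semicircle $H_k := [\theta_{k+1}, \theta_{k+1}+\pi) \subset S^1$ (with $\theta_{n+1} := \theta_1 + \pi$). Sorting by argument in $H_k$ and pairing consecutively, the $n-1$ consecutive squared distances are $2 - 2\cos g_{k+1}, \dots, 2 - 2\cos g_{k-1}$ (cyclic indexing, omitting $g_k$), and the resulting pair sum equals
\[
P_k \;=\; \sum_{i=1}^{n/2}\bigl(2 - 2\cos g_{k+2i-1}\bigr),
\]
which, by a routine check of indices modulo $n$, depends only on the parity of $k$: it equals $\sum_{j\text{ even}}(2-2\cos g_j)$ for odd $k$ and $\sum_{j\text{ odd}}(2-2\cos g_j)$ for even $k$.

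The crux of the argument is the cosine inequality $\sum_{j=1}^n \cos g_j \ge n - 2$, from which
\[
P_{\mathrm{odd}} + P_{\mathrm{even}} \;=\; 2n - 2\sum_j\cos g_j \;\le\; 4,
\]
so $\min(P_{\mathrm{odd}}, P_{\mathrm{even}}) \le 2$. I would prove this inequality by iterating the pointwise bound $\cos a + \cos b \ge 1 + \cos(a+b)$, valid whenever $a, b \ge 0$ and $a + b \le \pi$ (the difference equals $4\sin(a/2)\sin(b/2)\cos((a+b)/2) \ge 0$). Applied $n-1$ times to the gaps, this telescopes to $\sum_j \cos g_j \ge (n-1) + \cos\bigl(\sum_j g_j\bigr) = (n-1) + \cos\pi = n - 2$.

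Choosing $k$ of the parity with the smaller $P_k$ then gives pair sum at most $2 = r^2 - \alpha$, and Proposition~\ref{prop:pairing} immediately delivers $\Pr[\|\sigma_V\| \le r] = \Omega_r(n^{-1})$, as required. The main obstacle is the cosine inequality; the telescoping identity handles it cleanly, sidestepping the usual Lagrange-multiplier analysis. Secondary care is needed for the index bookkeeping modulo $n$ that identifies $P_k$ with a parity-restricted sum, but this is combinatorial rather than analytic.
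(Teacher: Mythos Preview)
Your proof is correct and follows essentially the same approach as the paper: the paper reorders $V$ into a semicircle, applies Lemma~\ref{lem:circleOrder} (whose proof via obtuse inscribed angles is exactly your telescoping inequality $\cos a+\cos b\ge 1+\cos(a+b)$ in geometric guise) to obtain $\sum_{i=1}^{n}\|v_i-v_{i+1}\|^2\le 4$, and then pigeonholes between the two consecutive pairings $(v_1,v_2),(v_3,v_4),\ldots$ and $(v_2,v_3),\ldots,(v_n,-v_1)$ --- precisely your odd/even parity choice --- before invoking Proposition~\ref{prop:pairing}. Your $\mathbb{RP}^1$ framing and trigonometric phrasing are tidy but not materially different from the paper's argument.
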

	\begin{proof}
		After reordering the vectors of $V$ and possibly replacing them with their negations, we may assume $v_1=(1,0)$ and that $0\le \arg(v_1)\le \cdots \le \arg(v_n)\le \pi$.  Define $\tilde{V}=(\tilde{v}_1,\ldots,\tilde{v}_n)=(v_2,v_3,\ldots,v_{n-1},v_n,-v_1)$ and note that $\sigma_V$ and $\sigma_{\tilde{V}}$ have the same distribution. 
		
		Set $v_{n+1}=(-1,0)$. By applying Lemma~\ref{lem:circleOrder} to $V\cup \{v_{n+1}\}$, we see that $\sum_{i=1}^n \|v_i-v_{i+1}\|^2\le 4$, and hence by the pigeonhole principle, either
		\[\sum_{i=1}^{n/2} \|v_{2i-1}-v_{2i}\|^2\le 2,\ \ \ \mathrm{or}\ \ \ \sum_{i=1}^{n/2} \|v_{2i}-v_{2i+1}\|^2=\sum_{i=1}^{n/2}\|\tilde{v}_{2i-1}-\tilde{v}_{2i}\|^2\le 2.\]  Without loss of generality we will assume $\sum_{i=1}^{n/2} \|v_{2i-1}-v_{2i}\|^2\le 2$.  In this case, the result follows from Proposition~\ref{prop:pairing} by taking $\alpha$ to be sufficiently small in terms of
		\[r>\sqrt{2}\ge \sqrt{\sum_{i=1}^{n/2} \|v_{2i-1}-v_{2i}\|^2},\] by taking $\alpha=2\sqrt{2}(r-\sqrt{2})$, for example.

	\end{proof}
	
	In order to prove the optimal bound in Theorem~\ref{thm:main}, we will need some `stability analysis'; we will break our analysis into two cases depending on whether $V$ is `close to' the extremal example (of having $n/2$ copies of two vectors $(1,0)$ and $(0,1)$) or not. To this end, we make the following definitions.
	\begin{definition}
		We say that two unit vectors $v,x\in \RR^2$ are \textit{$\gamma$-close} for some real number $\gamma$ if either the angle between $v$ and $x$ is at most $\gamma$ radians or the angle between $-v$ and $x$ is at most $\gamma$ radians, and we say that the pair is \textit{$\gamma$-far} otherwise.  We say that a sequence of unit vectors $V=(v_1,\ldots,v_n)$ is \textit{$(2,\gamma)$-close} if there exist two unit vectors $x_1,x_2$ such that every $v_i$ is $\gamma$-close to either $x_1$ or $x_2$, and we say that $V$ is \textit{$(2,\gamma)$-far} otherwise.
	\end{definition}
	
	The following lemma supplies the main consequence of being $(2,\gamma)$-far that we require.
	
	\begin{lemma}\label{lem:farCharacterize}
		For any $\gamma>0$, if a sequence of vectors $V$ is $(2,\gamma)$-far, then there exist three vectors $u_1,u_2,u_3$ from $V$ such that every pair $u_i,u_j$ with $i\ne j$ is $\gamma$-far.
	\end{lemma}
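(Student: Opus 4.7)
The plan is to apply the definition of $(2,\gamma)$-far twice in succession, greedily building the triple $(u_1, u_2, u_3)$. The key observation is that the definition grants us, for \emph{any} chosen pair of unit vectors $x_1, x_2$ (not required to lie in $V$), the existence of some $v_i \in V$ that is $\gamma$-far from both; this hypothesis is strong enough that we can feed previously constructed vectors of $V$ back in as the $x_j$.

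Concretely: since $V$ is $(2,\gamma)$-far, pick $u_1 \in V$ arbitrarily and apply the definition with $x_1 = x_2 = u_1$ to obtain $u_2 \in V$ that is $\gamma$-far from $u_1$. Next, apply the definition again with $x_1 = u_1$ and $x_2 = u_2$ to produce $u_3 \in V$ that is $\gamma$-far from both $u_1$ and $u_2$. By construction, $u_1, u_2, u_3$ are then pairwise $\gamma$-far, which is exactly the conclusion of the lemma.

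I do not anticipate any real obstacle here: the argument is just a direct double unpacking of the definition, requiring no geometric input beyond the trivial fact that every unit vector is $\gamma$-close to itself. That fact ensures $u_2 \neq u_1$ and $u_3 \notin \{u_1, u_2\}$ automatically, and it also implies that any $(2,\gamma)$-far sequence must contain at least three vectors, so the claim is not vacuous. The only minor subtlety to mention in the write-up is that the two $x_j$ in the definition are allowed to coincide, which is what makes the first application legitimate.
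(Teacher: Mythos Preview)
Your proposal is correct and follows essentially the same argument as the paper: pick $u_1$ arbitrarily, apply the definition with $x_1=x_2=u_1$ to get $u_2$, then apply it with $x_1=u_1,\ x_2=u_2$ to get $u_3$. The paper's proof is identical in structure, only phrased slightly more tersely.
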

	\begin{proof}
		Let $u_1$ be an arbitrary vector of $V$.  There must exist some vector $u_2$ of $V$ which is $\gamma$-far from $u_1$, as otherwise $x_1=x_2=u_1$ would contradict $V$ being $(2,\gamma)$-far.  Similarly, there must exist some $u_3$ which is $\gamma$-far from both $u_1,u_2$, as  otherwise $x_1=u_1$ and $x_2=u_2$ would contradict $V$ being $(2,\gamma)$-far.
	\end{proof}
	
	If $V$ is $(2,\gamma)$-far, then we will use the three vectors guaranteed by Lemma~\ref{lem:farCharacterize} and Lemma~\ref{lem:generalizedCircleOrder} to conclude that we can find a pairing of these vectors with pairwise distances strictly smaller than two.  To this end, we have the following lemma.
	\begin{lemma}\label{lem:threePointSet}
		Let $V'=(v_1',v_2',v_3',v_4')$ be a sequence of unit vectors in $\RR^2$ such that
		\begin{enumerate}
			\item $0=\arg(v_1')\le \arg(v_2')\le \arg(v_3')\le \arg(v_4')=\pi$, and
			\item $V'$ is $(2,\gamma)$-far for some $0<\gamma\le \pi/2$.
		\end{enumerate}
		Then
		\[\sum_{i=1}^3 \|v_i'-v'_{i+1}\|^2\le 4-8\sin^3(\gamma/2).\]
	\end{lemma}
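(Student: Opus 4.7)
My plan is to translate the $(2,\gamma)$-far hypothesis into pointwise lower bounds on the three consecutive angular gaps and then collapse the desired inequality using a classical triangle identity.

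First, I will apply Lemma~\ref{lem:farCharacterize} to $V'$ to extract three vectors in $V'$ that are pairwise $\gamma$-far. Writing $\theta_i := \arg(v_i')$, observe that $v_1'$ and $v_4'$ have arguments $0$ and $\pi$, so they are antipodal and hence $\gamma$-close for every $\gamma \ge 0$. The extracted triple therefore cannot contain both, and must be either $\{v_1',v_2',v_3'\}$ or $\{v_2',v_3',v_4'\}$. In either case, unpacking the three pairwise $\gamma$-far conditions (which, for unit vectors with arguments in $[0,\pi]$, amount to $|\theta_i - \theta_j| \in (\gamma,\pi-\gamma)$), together with $\theta_1 = 0$ and $\theta_4 = \pi$, will force each consecutive gap $\phi_i := \theta_{i+1} - \theta_i$ to satisfy $\phi_i > \gamma$. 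Note also that if $\gamma \ge \pi/3$ then $\phi_1 + \phi_2 + \phi_3 > \pi$, contradicting $\sum \phi_i = \pi$, so the hypothesis already forces $\gamma < \pi/3$; for $\gamma \in [\pi/3,\pi/2]$ no $(2,\gamma)$-far four-tuple exists and the conclusion is vacuous.

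Next, using $\|v_i' - v_{i+1}'\|^2 = 2 - 2\cos\phi_i$, I will rewrite the target inequality as
\[\cos\phi_1 + \cos\phi_2 + \cos\phi_3 \ge 1 + 4\sin^3(\gamma/2),\]
subject to $\phi_1+\phi_2+\phi_3 = \pi$ and $\phi_i > \gamma$. At this point I will invoke the classical triangle-angle identity
\[\cos\phi_1 + \cos\phi_2 + \cos\phi_3 = 1 + 4\sin(\phi_1/2)\sin(\phi_2/2)\sin(\phi_3/2),\]
valid whenever $\phi_1 + \phi_2 + \phi_3 = \pi$ (provable by repeated application of sum-to-product formulas).

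Finally, since each $\phi_i \in (\gamma, \pi)$, we have $\phi_i/2 \in (\gamma/2, \pi/2]$, on which $\sin$ is strictly increasing; hence $\sin(\phi_i/2) > \sin(\gamma/2)$ for each $i$, and the product of the three factors exceeds $\sin^3(\gamma/2)$, immediately yielding the desired bound. The main step requiring care is the case analysis in the first paragraph for deducing $\phi_i > \gamma$ on all three consecutive gaps simultaneously; once the triangle identity is available, the conclusion reduces to the monotonicity of $\sin$ on $[0,\pi/2]$.
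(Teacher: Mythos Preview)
Your proposal is correct and follows essentially the same route as the paper. Both arguments reduce to the identity $\sum_{i}4\sin^2(\phi_i/2)=4-8\prod_i\sin(\phi_i/2)$ for $\phi_1+\phi_2+\phi_3=\pi$, and then bound each factor by $\sin(\gamma/2)$ using monotonicity. The paper asserts a bit more bluntly that \emph{every} pair besides $(v_1',v_4')$ is $\gamma$-far (which is true, by a short case analysis), whereas you extract only one of the two triples via Lemma~\ref{lem:farCharacterize} and then check that the three pairwise conditions already pin down all three consecutive gaps; your version is slightly more explicit but otherwise the arguments coincide.
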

	Here, and throughout this section, we use the fact that if $u,v$ are unit vectors at angle $\theta$ from each other, then
	\[\|u-v\|=2\sin(\theta/2).\]
	\begin{proof}
		By Lemma~\ref{lem:farCharacterize} we know that every pair of vectors besides $v_1',v_4'$ are $\gamma$-far.  As noted above, if $\theta_i$ is the angle between $v_i'$ and $v'_{i+1}$, then $\|v_i'-v'_{i+1}\|^2=2\sin^2(\theta_i/2)$.  Moreover, because $\theta_1+\theta_2+\theta_3=\pi$, a standard trigonometric identity implies that
		\[\sum_{i=1}^3 \|v_i'-v'_{i+1}\|^2=\sum_{i=1}^3 4\sin^2(\theta_i/2)=4(1-2\sin(\theta_1/2)\sin(\theta_2/2)\sin(\theta_3/2)).\]
		Because each $v_i',v'_{i+1}$ is $\gamma$-far, we have that $\gamma\le \theta_i\le \pi-\gamma$, giving the desired result.
	\end{proof}
	
	The lemmas above are enough to prove Theorem~\ref{thm:main} whenever $V$ is $(2,\gamma)$-far, so it remains to deal with the case that $V$ is $(2,\gamma)$-close.  This is easy to do in the following special case; here, we emphasize that our exact choices of $1/2$ and $\arcsin(0.1)$ are not particularly important.
	
	\begin{lemma}\label{lem:evenSteven}
		Let $V=(v_1,\ldots,v_n)$ be a sequence of unit vectors in $\RR^2$ with $n$ even and $0<\gamma\le \arcsin(0.1)$.  If there exists unit vectors $x_1,x_2$ and an even integer $m$ such that every $v_i$ with $i\le m$ is $\gamma$-close to $x_1$ and every $v_i$ with $i>m$ is $\gamma$-close to $x_2$, then
		\[\Pr[\|\sigma_V\|\le 1/2]=\Omega(n^{-1}).\]
	\end{lemma}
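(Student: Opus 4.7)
The plan is to show that, after a careful reordering of $V$, Proposition~\ref{prop:pairing} applies directly with $r=1/2$. The main obstacle is that for a naive ordering, the sum $\sum_{i=1}^{n/2}\|v_{2i-1}-v_{2i}\|^2$ could easily be as large as $\Theta(n)$, far too big to feed into Proposition~\ref{prop:pairing}; I would overcome this by sorting each block by angle before pairing.

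First, since the distribution of $\sigma_V$ is invariant under replacing any $v_i$ with $-v_i$ (and simultaneously $\eps_i$ with $-\eps_i$), I would assume without loss of generality that every $v_i$ with $i\le m$ lies within angle $\gamma$ of $+x_1$ (as opposed to $-x_1$), and likewise every $v_i$ with $i>m$ lies within angle $\gamma$ of $+x_2$. Next, I would reorder each block in increasing order of signed angle from the corresponding $x_k$. Because $m$ (and hence $n-m$) is even, the consecutive pairing $(v_{2i-1},v_{2i})$ stays entirely within a single block after this reordering.

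The heart of the argument is to bound $\sum_{i=1}^{n/2}\|v_{2i-1}-v_{2i}\|^2$ by a small absolute constant. Consider block $1$: let the sorted angles be $\theta_{(1)}\le\cdots\le\theta_{(m)}$, all lying in $[-\gamma,\gamma]$, and set $d_i:=\theta_{(2i)}-\theta_{(2i-1)}\ge 0$. These $d_i$ are a subset of the consecutive differences in the sorted sequence, so $\sum_{i=1}^{m/2}d_i\le\theta_{(m)}-\theta_{(1)}\le 2\gamma$ and $\max_i d_i\le 2\gamma$. Combining the chord estimate $\|v_{(2i-1)}-v_{(2i)}\|^2=4\sin^2(d_i/2)\le d_i^2$ with the elementary inequality $\sum d_i^2\le (\max d_i)(\sum d_i)\le 4\gamma^2$ yields a block-$1$ contribution of at most $4\gamma^2$. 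The same bound applies to block $2$, giving $\sum_{i=1}^{n/2}\|v_{2i-1}-v_{2i}\|^2\le 8\gamma^2$ in total.

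Because $\gamma\le\arcsin(0.1)$ forces $8\gamma^2<1/4$, Proposition~\ref{prop:pairing} applies with $d=2$, $r=1/2$, and $\alpha=1/4-8\gamma^2$ (a positive absolute constant), which gives $\Pr[\|\sigma_V\|\le 1/2]=\Omega(n^{-1})$ as required. I do not anticipate a substantive obstacle beyond identifying the correct reordering; once the within-block sorting is in place, the bookkeeping to check the hypotheses of Proposition~\ref{prop:pairing} is routine.
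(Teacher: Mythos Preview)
Your proof is correct and follows essentially the same route as the paper: negate so each block clusters around a single direction, sort each block by angle, bound $\sum_{i=1}^{n/2}\|v_{2i-1}-v_{2i}\|^2$ by a small absolute constant, and invoke Proposition~\ref{prop:pairing} with $r=1/2$. The only cosmetic difference is that the paper obtains the per-block bound by citing Lemma~\ref{lem:generalizedCircleOrder} (giving $\sum_{i=1}^{m-1}\|v_i-v_{i+1}\|^2\le\|v_1-v_m\|^2\le 4\sin^2\gamma$), whereas you derive the equivalent bound directly via $\sum d_i^2\le(\max_i d_i)(\sum_i d_i)\le 4\gamma^2$; both yield a total of at most $8\gamma^2\approx 0.08<1/4$ and the rest is identical.
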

	\begin{proof}
		Possibly by taking negations of vectors, we may assume that every $v_i$ with $i\le m$ has angle at most $\gamma$ with $x_1$, and possibly by rotating and reordering the first $m$ vectors, we may further assume that $0=\arg(v_1)\le \cdots \le \arg(v_m)\le 2\gamma$. Applying a trivial bound alongside Lemma~\ref{lem:generalizedCircleOrder} with $V'=(v_1,v_m)$ gives
		\[\sum_{i=1}^{m/2}\|v_{2i-1}-v_{2i}\|^2\le \sum_{i=1}^{m-1} \|v_i-v_{i+1}\|^2\le \|v_1-v_m\|^2=4\sin^2(\arg(v_m)/2)\le 4\sin^2(\gamma)\le 0.1.\]
		By the same argument, we may negate and reorder the $v_i$ with $i>m$ so that
		\[\sum_{i=m/2+1}^{n/2}\|v_{2i-1}-v_{2i}\|^2\le 0.1,\]
		so in total we have
		\[\sum_{i=1}^{n/2}\|v_{2i-1}-v_{2i}\|^2\le 0.2.\]
		Since $r=1/2$ satisfies $r^2=1/4>0.2$, Proposition~\ref{prop:pairing} implies the result by taking $\alpha=0.01$, for example.
	\end{proof}
	Lemma~\ref{lem:evenSteven} solves (in a strong sense) the problem when $V$ is $(2,\gamma)$-close and an even number of vectors are close to each of $x_1,x_2$.   If instead an odd number of vectors are close to each of $x_1,x_2$, then we will prove the result by selecting two vectors $u_1,u_2$ near $x_1,x_2$ respectively, applying Lemma~\ref{lem:evenSteven} on $V-\{u_1,u_2\}$, and then adding signed copies of $u_1,u_2$ back to $\sigma_{V-\{u_1,u_2\}}$. The following geometric lemma will be necessary in order for this scheme to work.

	\begin{lemma}\label{lem:kiteBound}
		Given unit vectors $u,u'\in \RR^2$ and any vector $w\in \RR^2$ with $\|w\|\le 1/2$, there is a choice of signs $\eps,\eps'\in \{-1,+1\}$ such that $\|w+\eps u+\eps' u' \| \le \sqrt{2}$.
	\end{lemma}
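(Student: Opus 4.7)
The plan is to exploit the pair of perpendicular axes naturally associated to $u$ and $u'$. Setting $p = u + u'$ and $q = u - u'$, one checks immediately that $p \cdot q = \|u\|^2 - \|u'\|^2 = 0$ and $\|p\|^2 + \|q\|^2 = 2\|u\|^2 + 2\|u'\|^2 = 4$. Under this reparametrisation the four candidate vectors $w + \eps u + \eps' u'$ become exactly $w \pm p$ and $w \pm q$, so the goal reduces to finding one of these four points within distance $\sqrt{2}$ of the origin. Writing $w = a \hat p + b \hat q$ in the orthonormal basis $(\hat p, \hat q)$, the four candidates lie on two perpendicular lines through $w$, at signed offsets $\pm\alpha$ along $\hat p$ and $\pm\beta$ along $\hat q$, where $\alpha := \|p\|$, $\beta := \|q\|$ and $\alpha^2 + \beta^2 = 4$.

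The core of the argument will be to bound the sum of the best choices along the two axes. Arranging $\alpha \le \beta$ so that $\alpha \le \sqrt{2} \le \beta$, and assuming for the moment that $|a| \le \alpha$ and $|b| \le \beta$, a direct expansion gives
\[
\min_{\pm}\|w \pm p\|^2 + \min_{\pm}\|w \pm q\|^2 \;=\; (\alpha - |a|)^2 + b^2 + a^2 + (\beta - |b|)^2 \;=\; 4 + 2\|w\|^2 - 2(\alpha|a| + \beta|b|).
\]
Since $\alpha \ge |a|$ and $\beta \ge |b|$ in the regime we are in, we get $\alpha|a| \ge a^2$ and $\beta|b| \ge b^2$, so $\alpha|a| + \beta|b| \ge \|w\|^2$. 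This forces the displayed sum to be at most $4$, and hence at least one of the two minima is at most $2$, producing a sign pair $(\eps, \eps')$ with $\|w + \eps u + \eps' u'\| \le \sqrt{2}$.

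What remains is to dispose of the degenerate cases. The inequality $|b| \le \beta$ is automatic since $|b| \le \|w\| \le 1/2 < \sqrt{2} \le \beta$. If $|a| > \alpha$, then $\alpha < |a| \le 1/2$, and choosing the sign that places $w$ and $\pm p$ in opposite directions along $\hat p$ yields $\min_\pm \|w \pm p\|^2 \le \|w\|^2 \le 1/4 < 2$ directly. The case $\alpha = 0$ (i.e.\ $u' = -u$) is handled by the trivial observation $w + u + u' = w$, which has norm at most $1/2$. The proof is therefore essentially a short computation once the reparametrisation is in place; the only conceptual step, and the main thing to spot, is that $u+u'$ and $u-u'$ are perpendicular with combined squared length exactly $4$, which turns four arbitrary candidates into a clean axis-aligned configuration.
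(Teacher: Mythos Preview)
Your proof is correct and uses essentially the same idea as the paper's: both exploit that $p=u+u'$ and $q=u-u'$ are orthogonal with $\|p\|^2+\|q\|^2=4$, and then bound the sum of the squared norms of one candidate from each axis by $4$. Your coordinate-free version, working directly in the $(\hat p,\hat q)$ basis and taking minima over signs rather than fixing them via a trigonometric normalisation, is a bit cleaner and handles the degenerate cases ($|a|>\alpha$ and $\alpha=0$) more explicitly.
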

	\begin{proof}
		Possibly by replacing $u'$ with its negation, we may assume that the angle $\beta\le \pi$ between $u$ and $u'$ is at least $\pi/2$, and possibly by rotating our vectors, we may assume without loss of generality that $u=(\cos(\beta/2),\sin(\beta/2))$ and $u'=(\cos(\beta/2),-\sin(\beta/2))$.
		
		Let $K=\|w\|\le 1/2$ so that $w=(K\cos(\theta),K\sin(\theta))$ for some $\theta$. Possibly by replacing $w$ with its negation, we may assume that $-\pi/2\le \theta\le \pi/2$, and without loss of generality, we may assume that $0\le \theta\le \pi/2$.
		Consider the vectors
		\[w_1=w-u-u'=(K\cos(\theta)-2\cos(\beta/2),K \sin(\theta)),\]
		\[w_2=w-u+u'=(K\cos(\theta),K\sin(\theta)-2\sin(\beta/2)),\]
		and observe that to prove the lemma, it suffices to show that at least one of these vectors has norm at most $\sqrt{2}$.  For this, we observe that
		\begin{align*}
			\|w_1\|^2+\|w_2\|^2 & =[K^2+4\cos^2(\beta/2)-4K\cos(\theta)\cos(\beta/2)]                   \\
			& \,\,\,\,\,\,\,\,\,+[K^2+4\sin^2(\beta/2)-4K\sin(\theta)\sin(\beta/2)] \\ &=2K^2+4-4K\cos(\theta-\beta/2)\le 4,\end{align*}
		where the last inequality relies on the fact that $K\le 1/2 \le 1/\sqrt{2}$, $|\theta-\beta/2|\le \pi/4$ for $0\le \theta\le \pi/2$, and $\pi/4\le \beta/2\le \pi/2$.  This implies that $\|w_t\|\le \sqrt{2}$ for some $t\in \{1,2\}$, proving the result.
	\end{proof}

	We can now prove Theorem~\ref{thm:main} in the case when $n$ is even.
	\begin{thm}\label{thm:even}
		There exists an absolute constant $c>0$ such that if $V=(v_1,\ldots,v_n)$ is a sequence of unit vectors in $\mathbb{R}^2$ with $n$ even, then
		\[\Pr\left[\|\sigma_V\|\le \sqrt{2}\right]\ge \frac{c}{n}.\]
	\end{thm}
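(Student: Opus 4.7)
The plan is to split on a structural dichotomy for $V$: either $V$ is $(2,\gamma)$-far for a fixed absolute constant $\gamma$ (which I will take as $\gamma=\arcsin(0.1)$ to match the threshold in Lemma~\ref{lem:evenSteven}), or $V$ is $(2,\gamma)$-close. The two regimes will be handled by quite different arguments.

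In the $(2,\gamma)$-far regime, I first apply Lemma~\ref{lem:farCharacterize} to obtain three vectors $u_1,u_2,u_3\in V$ that are pairwise $\gamma$-far. After rotating coordinates and replacing some $v_i$ by $-v_i$ (neither of which affects the distribution of $\sigma_V$), I arrange so that $u_1=(1,0)$ and every $v_i$ lies in the closed upper half-plane sorted by argument, then append $v_{n+1}:=(-1,0)=-u_1$. Setting $V'=((1,0),u_2,u_3,(-1,0))$ (with $u_2,u_3$ reordered by argument if necessary), I verify that $V'$ is $(2,\gamma/2)$-far and invoke Lemma~\ref{lem:threePointSet} to get $\sum_{i=1}^{3}\|v'_i-v'_{i+1}\|^2\le 4-\eta$ for $\eta:=8\sin^3(\gamma/4)>0$. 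Lemma~\ref{lem:generalizedCircleOrder} propagates this to $\sum_{i=1}^n\|v_i-v_{i+1}\|^2\le 4-\eta$, and the same pigeonhole as in Proposition~\ref{prop:weak2} (between the two consecutive-pair sums, with the shifted case handled via $\tilde V=(v_2,\ldots,v_n,-v_1)$) produces a pairing of squared-distance sum at most $2-\eta/2$. Proposition~\ref{prop:pairing} with $r=\sqrt{2}$ and $\alpha=\eta/4$ then delivers the desired bound $\Omega(1/n)$.

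In the $(2,\gamma)$-close regime, I partition $V$ into $V_1,V_2$ according to which of $x_1,x_2$ each vector is $\gamma$-close to, and set $m_j=|V_j|$. Since $n$ is even, $m_1$ and $m_2$ share parity. If both are even, reordering so that $V_1$ precedes $V_2$ and applying Lemma~\ref{lem:evenSteven} directly yields the stronger conclusion $\Pr[\|\sigma_V\|\le 1/2]=\Omega(1/n)$. If instead both are odd, I pick any $u_1\in V_1$ and $u_2\in V_2$, apply the even case to $V'':=V\setminus\{u_1,u_2\}$ (whose parities in each group become even), and then write $\sigma_V=\sigma_{V''}+\eps_1 u_1+\eps_2 u_2$ with the new signs $\eps_1,\eps_2$ independent of $\sigma_{V''}$. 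Lemma~\ref{lem:kiteBound} guarantees that for every realization $w$ of $\sigma_{V''}$ with $\|w\|\le 1/2$ there is a sign pair for which $\|w+\eps_1 u_1+\eps_2 u_2\|\le\sqrt{2}$, so I lose only a factor of $1/4$ and retain $\Pr[\|\sigma_V\|\le\sqrt{2}]=\Omega(1/n)$.

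The main obstacle I expect is the stability check in the far case: verifying that $V'=((1,0),u_2,u_3,(-1,0))$ is indeed $(2,\gamma')$-far for some constant $\gamma'>0$, given only that the triple $u_1,u_2,u_3$ extracted from Lemma~\ref{lem:farCharacterize} is pairwise $\gamma$-far. The subtlety is that the ``$\gamma$-close'' relation is symmetric under negation, so $v'_1$ and $v'_4$ are automatically covered by a single direction, and two vectors simultaneously $\gamma$-close to a common $x_j$ are only guaranteed to be $2\gamma$-close to each other (not $\gamma$-close). This forces one to shrink the parameter by a constant factor, passing to $\gamma'=\gamma/2$, before invoking Lemma~\ref{lem:threePointSet}; once that is done, all remaining steps are routine assembly of the lemmas developed earlier.
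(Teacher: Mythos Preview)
Your proof is correct and follows the same two-case structure as the paper's own argument. Your $\gamma\to\gamma/2$ adjustment before invoking Lemma~\ref{lem:threePointSet} is a valid way to make the black-box application airtight; the paper instead applies the bound with parameter $\gamma$ directly, which is justified because in $V'=(u_1,u_2,u_3,-u_1)$ every pair except $(u_1,-u_1)$ is already $\gamma$-far by construction---exactly the property the proof of Lemma~\ref{lem:threePointSet} extracts from its $(2,\gamma)$-far hypothesis---so your halving is harmless but not strictly necessary.
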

	\begin{proof}
		The first half of this proof will parallel that of Proposition~\ref{prop:weak2}, and as such we omit some of the redundant details in this case.  Let $V=(v_1,\ldots,v_n)$ be a sequence of unit vectors in $\RR^2$ with $n$ even, and for concreteness, let $\gamma=\arcsin(0.1)\le \pi/2$ (though this exact value is not very important). We break our argument into two cases depending on whether $V$ is $(2,\gamma)$-close or not.
		
		First, we suppose that $V$ is $(2,\gamma)$-far. In this case, the following claim implies that we can apply Proposition~\ref{prop:pairing} with $r=\sqrt{2}$ and $\alpha=0.00001$ to get the desired result.
		\begin{claim}
			It is possible to reorder and negate some of the vectors of $V$ so that
			\[\sum_{i=1}^{n/2} \|v_{2i-1}-v_{2i}\|^2\le 1.9995.\]
		\end{claim}
		\begin{proof}
			By Lemma~\ref{lem:farCharacterize}, there exist $u_1,u_2,u_3$ in $V$ which are each $\gamma$-far from each other, and possibly by reordering, negating, and rotating these vectors, we can assume $v_1=u_1=(1,0)$ and that $0\le \arg(v_1)\le \cdots \le \arg(v_n)\le \pi$.  Letting $v_{n+1}=-v_1=(-1,0)$ and applying Lemma~\ref{lem:generalizedCircleOrder} with $V'=(v_1,u_2,u_3,v_{n+1})$ shows that
			\[\sum_{i=1}^{n} \|v_i-v_{i+1}\|^2\le 4-8\sin^3(\gamma/2)\le 2\cdot 1.9995.\]
			The claim follows from the pigeonhole principle by either considering $V$ or $\tilde{V}=(v_2,v_3,\ldots,v_n,-v_1)$.
		\end{proof}
		
		Next, suppose that $V$ is $(2,\gamma)$-close.  This in particular means that there exists unit vectors $x_1,x_2$ and some $1\le m\le n$ such that, possibly after reordering the vectors $V$, we have that $v_i$ is $\gamma$-close to $x_1$ for all $i\le m$ and $v_i$ is $\gamma$-close to $x_2$ for all $i>m$.  If $m$ is even, then the result follows from Lemma~\ref{lem:evenSteven}, so we can assume that $m$ is odd.  Let $V'$ be the subsequence of $V$ obtained by removing $v_m$ and $v_n$.  In this case $V'$ satisfies the conditions of Lemma~\ref{lem:evenSteven} with $x_1,x_2$ and $m-1$, so we conclude that $\Pr[\|\sigma_{V'}\|\le 1/2]=\Omega(n^{-1})$.  Observe that conditional on $\sigma_{V'}$ lying in this range, the probability that $\sigma_V=\sigma_{V'}+\eps_{m}v_m+\eps_nv_n$ has norm at most $\sqrt{2}$ is at least $1/4$ by Lemma~\ref{lem:kiteBound}, so we again conclude that $\Pr[\|\sigma_V\|\le \sqrt{2}]=\Omega(n^{-1})$, completing the proof.
	\end{proof}
	
	We will now deduce the case of odd $n$ from the even case, for which we need the following geometric result.
	\begin{prop}\label{prop:reduceToEven}
		If $V=(v_1,\ldots,v_n)$ is a sequence of unit vectors in $\mathbb{R}^2$ with $n\ge 3$, then (at least) one of the following statements holds.
		\begin{enumerate}
			\item[(a)] There exists some $i$ such that for all $j$, either \[\arg(v_i)\le \arg(v_j)\le \arg(v_i)+{7\pi}/{24},\] or \[\arg(v_i)\le \arg(-v_j)\le \arg(v_i)+{7\pi}/{24}.\]
			\item[(b)] There exist distinct $i,j,k$ such that for any $w\in \mathbb{R}^2$ with $\|w\|\le \sqrt{2}$, there exist signs $\epsilon_{i},\epsilon_{j},\epsilon_{k}\in \{-1,+1\}$ such that $\|w+\epsilon_{i} v_{i}+\epsilon_{j} v_{j}+\epsilon_{k}v_{k}\|\le \sqrt{2}$.
		\end{enumerate}
	\end{prop}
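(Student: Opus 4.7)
The plan is to prove the contrapositive: assume (a) fails, and deduce (b). Define the projective angle $\theta_\ell := \arg(v_\ell) \bmod \pi$ of each $v_\ell$ on the projective circle $[0,\pi)$ (identifying $v$ with $-v$). The condition in (a) on $v_j$ amounts to saying $\theta_j$ lies in the projective arc $[\arg(v_i),\arg(v_i)+7\pi/24] \bmod \pi$, so (a) asserts that $\{\theta_\ell\}$ is contained in some projective arc of length $7\pi/24$ anchored at one of them. Since the minimum enclosing arc of a finite subset of a circle can always be arranged so that its left endpoint is a point of the set, (a) is equivalent to saying that $\{\theta_\ell\}$ fits in \emph{some} projective arc of length $7\pi/24$.

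Thus if (a) fails, the projective angles of $V$ do not fit in any arc of length $7\pi/24$; in particular, there exist indices $i,k$ with projective distance strictly greater than $7\pi/24$, and since $n \ge 3$ we may pick any third index $j \ne i,k$. After rotation, reflection, and relabeling, we may assume $u_1 := v_i = (1,0)$, $u_3 := v_k = (\cos\alpha,\sin\alpha)$ with $\alpha \in (7\pi/24,\pi/2]$, and $u_2 := v_j = (\cos\beta,\sin\beta)$ with $\beta \in [0,\alpha]$. Property (b) for the triple $(i,j,k)$ is then equivalent (using that the set $S := \{\epsilon_1 u_1+\epsilon_2 u_2+\epsilon_3 u_3 : \epsilon \in \{\pm 1\}^3\}$ satisfies $S = -S$) to the purely geometric assertion
\[
B(0,\sqrt{2}) \;\subseteq\; \bigcup_{s \in S} B(s,\sqrt{2}).
\]

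To verify this covering, I would proceed direction by direction on $B(0,\sqrt{2})$. Near the origin, coverage is automatic from the elements $\pm u_3$: they have norm $1 \le \sqrt{2}$, so $B(\pm u_3,\sqrt{2})$ each contain the origin and a full neighborhood of it. Near the boundary $\partial B(0,\sqrt{2})$, coverage by $B(\pm u_3,\sqrt{2})$ alone leaves only two antipodal ``gaps'' of angular half-width $\arccos(1/(2\sqrt{2}))$ about the directions perpendicular to $\pm u_3$, and I would fill these gaps using elements such as $\pm(2u_1-u_3) \in S$ (when $\beta=0$) or $\pm(u_1-u_2+u_3) \in S$ more generally, whose norms and arguments can be computed via the parallelogram law and half-angle identities.

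The main obstacle is the uniform geometric verification of this covering over all admissible $(\alpha,\beta)$. The tightest cases are likely the degenerate ones $\beta=0$ (so $u_1=u_2$) and $\beta=\alpha$ (so $u_2=u_3$), where $S$ collapses to only six distinct points. When $\beta=0$, for instance, $S = \{\pm u_3,\ \pm(2u_1-u_3),\ \pm(2u_1+u_3)\}$, and the threshold $\alpha > 7\pi/24$ is (more than) enough for the $\sqrt{2}$-disks around these six points to cover $B(0,\sqrt{2})$, via a concrete trigonometric inequality controlling the uncovered boundary gap. A monotonicity argument (as $\beta$ moves into generic position, $S$ becomes larger and better spread) should then reduce the intermediate cases to these extremes.
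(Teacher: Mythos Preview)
Your overall plan (take the contrapositive, find two projectively far vectors plus an arbitrary third, and reformulate (b) as a covering statement $B(0,\sqrt{2})\subseteq\bigcup_{s\in S}B(s,\sqrt{2})$) is sound, but the proposal has two genuine gaps.

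\textbf{The normalization $\beta\in[0,\alpha]$ is not always achievable.} Take the three vectors with projective angles $0,\pi/3,2\pi/3$ (equispaced on the projective circle). All pairwise projective distances equal $\pi/3>7\pi/24$, so (a) fails. Your normalization asks for representatives $u_1,u_2,u_3$ on the full circle at angles $0,\beta,\alpha$ with $0\le\beta\le\alpha\le\pi/2$; in particular, all three would lie in an arc of length $\alpha\le\pi/2$, so the complementary gap is at least $3\pi/2$. But for the equilateral projective triple, every choice of signs yields a gap-multiset of either $\{\pi/3,\pi/3,4\pi/3\}$ or $\{2\pi/3,2\pi/3,2\pi/3\}$, with maximum gap $4\pi/3<3\pi/2$. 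So no rotation, reflection, or relabeling places $u_2$ inside the arc from $u_1$ to $u_3$ when $\alpha\le\pi/2$. You would need to allow $\alpha$ up to (roughly) $2\pi/3$ or drop the constraint on $\beta$, which widens the subsequent case analysis.

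\textbf{The covering step is only a plan, not a proof.} You identify the boundary gap left by $B(\pm u_3,\sqrt{2})$ (its half-width is actually $\pi/2-\arccos(1/(2\sqrt{2}))$, not $\arccos(1/(2\sqrt{2}))$), and you propose to fill it with $\pm(2u_1-u_3)$ in the degenerate case $\beta=0$ and then appeal to a ``monotonicity argument'' for general $\beta$. No such monotonicity is established, and it is not clear one exists: as $\beta$ varies, $S$ deforms continuously and coverage of the critical boundary arc does not obviously improve monotonically. The heart of the proposition is exactly this uniform geometric verification, and your proposal leaves it open.

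For comparison, the paper avoids the simultaneous three-vector covering entirely by a sequential reduction. It first spends the arbitrary third vector $v_k$: for any $\|w\|\le\sqrt{2}$, some sign $\epsilon_k$ makes $\langle w,\epsilon_k v_k\rangle\le 0$, so $\|w+\epsilon_k v_k\|\le\sqrt{3}$. It then shows that two unit vectors at angle $\beta\in[\pi/2,17\pi/24]$ can bring any $w'$ with $\|w'\|\le\sqrt{3}$ back into $B(0,\sqrt{2})$, via an explicit computation of $\|w_1\|^2+\|w_2\|^2$ together with a short endpoint check. The failure of (a) is exactly what forces the angle between the two special vectors into $[\pi/2,17\pi/24]$ after choosing a sign. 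This two-stage decomposition sidesteps both of your difficulties: the third vector needs no angular control, and the remaining two-vector claim involves only one free parameter.
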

	We note again that the exact value of ${7\pi}/{24}$ is not crucial here; we simply need some number strictly smaller than ${\pi}/{3}$ and slightly larger than ${\pi}/{4}$.
	\begin{proof}
		Our proof rests on the following technical geometric claim analogous to Lemma~\ref{lem:kiteBound}.
		\begin{claim}\label{cl:geometry}
			If $u,u'\in \mathbb{R}^2$ are unit vectors at an angle $\beta$ satisfying ${\pi}/{2}\le \beta\le {17\pi}/{24}$, then for any $w'\in \mathbb{R}^2$ of norm at most $\sqrt{3}$,  there exist $\epsilon,\epsilon'\in \{-1,+1\}$ such that $\|w'+\epsilon u+\epsilon' u'\|\le \sqrt{2}$.
		\end{claim}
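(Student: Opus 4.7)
The plan is to follow the strategy of Lemma~\ref{lem:kiteBound} and reduce the claim to a disk-covering question. After rotating coordinates I may assume $u = (\cos(\beta/2), \sin(\beta/2))$ and $u' = (\cos(\beta/2), -\sin(\beta/2))$, so that the four possible corrections $\epsilon u + \epsilon' u'$ are $\pm p$ and $\pm q$, where $p = (2\cos(\beta/2), 0)$ and $q = (0, 2\sin(\beta/2))$. These are orthogonal with $\|p\|^2 + \|q\|^2 = 4$ and $\|p\|\|q\| = 2\sin\beta$. The claim is then equivalent to the geometric statement that the closed disk $B(0, \sqrt{3})$ is contained in $B(p, \sqrt{2}) \cup B(-p, \sqrt{2}) \cup B(q, \sqrt{2}) \cup B(-q, \sqrt{2})$, and by the reflection symmetry of these four centres across the coordinate axes it suffices to show that any $w' = (x, y)$ in the closed first quadrant with $x^2 + y^2 \le 3$ lies in $B(p, \sqrt{2}) \cup B(q, \sqrt{2})$.

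I would argue by contradiction: suppose $(x - \|p\|)^2 + y^2 > 2$ and $x^2 + (y - \|q\|)^2 > 2$ simultaneously, call the resulting open region $F \subseteq \{x, y \ge 0\}$, and aim to show every $w' \in F$ satisfies $\|w'\|^2 > 3$. The key calculation is to locate the intersection of $\partial B(p, \sqrt{2})$ and $\partial B(q, \sqrt{2})$ in the first quadrant: solving the two circle equations and using $\|p\|^2 + \|q\|^2 = 4$, this intersection is the point $w^* = \bigl(\tfrac{\|p\| + \|q\|}{2}, \tfrac{\|p\| + \|q\|}{2}\bigr)$, with $\|w^*\|^2 = (\|p\| + \|q\|)^2/2 = 2 + \|p\|\|q\| = 2 + 2\sin\beta$. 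I then argue that $\inf_{w' \in F}\|w'\|^2 = \|w^*\|^2$. Since the gradient of $\|w'\|^2$ never vanishes on $F$ (its only critical point in the first quadrant is the origin, which lies in $\overline{B(p,\sqrt{2})}$ and so is not in $F$), the infimum is approached on the boundary. On each of the two circular arcs making up $\partial F$, the closest point of the full circle to the origin lies either outside the first quadrant (because $\|p\| \le \sqrt{2}$, so the nearest point of $\partial B(p, \sqrt{2})$ to the origin has negative $x$-coordinate) or inside the other disk (because $\|q\| \ge \sqrt{2}$ forces the nearest point of $\partial B(q, \sqrt{2})$ to the origin, $(0, \|q\| - \sqrt{2})$, into $B(p, \sqrt{2})$), so each arc's minimum is attained at the shared endpoint $w^*$. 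The coordinate-axis portions of $\partial F$ sit at distance at least $\|p\| + \sqrt{2}$ or $\|q\| + \sqrt{2}$ from the origin, both of which comfortably exceed $\sqrt{2 + 2\sin\beta}$ throughout our range.

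Putting these together, $w' \in F$ forces $\|w'\|^2 \ge 2 + 2\sin\beta$. Since $\beta \in [\pi/2, 17\pi/24] \subset (\pi/6, 5\pi/6)$, we have $\sin\beta \ge \sin(17\pi/24) = \sin(7\pi/24) > \sin(\pi/6) = 1/2$, so $\|w'\|^2 > 3$, yielding the desired contradiction. The main obstacle I foresee is the bookkeeping in the boundary analysis for the infimum of $\|w'\|^2$ on $\overline{F}$: ruling out interior extrema on each of the two arcs, and confirming that the ``bad'' endpoints of each arc (those on the coordinate axes or inside the other disk) are not the minimizers. Once that geometric step is handled cleanly, the final arithmetic reduces to $\sin\beta > 1/2$, which holds with a bit of slack throughout $[\pi/2, 17\pi/24]$—exactly what the constant $17\pi/24$ was chosen to provide.
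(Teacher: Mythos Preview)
Your proposal is correct and takes a genuinely different route from the paper's own argument.

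The paper proceeds algebraically: after the same coordinate normalisation, it writes $w'=(K\cos\theta,K\sin\theta)$, computes
\[
\|w_1\|^2+\|w_2\|^2=2K^2+4-4K\cos(\theta-\beta/2),
\]
and splits into two cases. When $K\le 2\cos(\theta-\beta/2)$ the displayed sum is at most $4$, so one of $\|w_1\|,\|w_2\|$ is at most $\sqrt{2}$; in the complementary case the paper pins down $\theta$ to a tiny interval near $0$ and finishes with a direct numerical estimate on $\|w_1\|^2$.

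Your approach instead recasts the claim as the disk-covering statement $B(0,\sqrt{3})\subseteq \bigcup_{\epsilon,\epsilon'} B(\epsilon p+\epsilon' q,\sqrt{2})$ and locates the unique first-quadrant intersection $w^*$ of $\partial B(p,\sqrt{2})$ and $\partial B(q,\sqrt{2})$, obtaining the exact value $\|w^*\|^2=2+2\sin\beta$. This is more conceptual and yields the sharp range: the argument works whenever $\sin\beta>\tfrac12$, i.e.\ for all $\beta\in(\pi/6,5\pi/6)$, making transparent why any constant strictly below $5\pi/6$ (such as $17\pi/24$) suffices. The paper's numerical estimate obscures this. The price you pay is the boundary bookkeeping you flag: to conclude that the arc-minimum sits at $w^*$, it is not quite enough to say the global circle-minimum lies off the arc; you should also note that $\|\cdot\|^2$ restricted to a circle has exactly two critical points (the near and far points on the line through the origin and the centre), so on any sub-arc avoiding the near point the function is monotone towards the far endpoint $(\|p\|+\sqrt{2},0)$ (respectively $(0,\|q\|+\sqrt{2})$), forcing the minimum to occur at $w^*$. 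With that one extra sentence the argument is complete.
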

		\begin{proof}
			Possibly by rotating our vectors, we may assume without loss of generality that $u=(\cos(\beta/2),\sin(\beta/2))$ and $u'=(\cos(\beta/2),-\sin(\beta/2))$.  Let $K=\|w'\|\le \sqrt{3}$ so that $w'=(K\cos(\theta),K\sin(\theta))$ for some $\theta$.  Possibly by replacing $w'$ with its negation we may assume $-{\pi}/{2}\le \theta\le {\pi}/{2}$, and without loss of generality, we may assume that $0\le \theta\le {\pi}/{2}$.  Consider the pair of vectors
			\[w_1=w'-u-u'=(K\cos(\theta)-2\cos(\beta/2),K \sin(\theta)),\]
			\[w_2=w'-u+u'=(K\cos(\theta),K\sin(\theta)-2\sin(\beta/2)),\]
			and observe that for the claim it suffices to show that at least one of these vectors has norm at most $\sqrt{2}$.
			
			First, consider the case that $K\le 2 \cos(\theta-\beta/2)$. As in the argument in Lemma~\ref{lem:kiteBound}, we have by our assumption on $K$ that
			\begin{align*}\|w_1\|^2+\|w_2\|^2&=2K^2+4-4K\cos(\theta-\beta/2)\le 4,\end{align*}
			and hence $\| w_t\|^2\le 2$ for some $t\in \{1,2\}$ as desired.
			
			Now, assume that $K>2 \cos(\theta-\beta/2)$; in this case, we shall show $\|w_1\|\le \sqrt{2}$.  Because $K\le \sqrt{3}$, our assumed inequality implies $|\theta-\beta/2|>{\pi}/{3}$.  Note that we can not have $\theta> \beta/2+{\pi}/{3}$ since $\beta/2\ge {\pi}/{4}$ and $\theta\le {\pi}/{2}$, so we must have
			\[0\le \theta<\beta/2-\frac{\pi}{3}\le \frac{\pi}{48},\]
			with this last step using $\beta\le {17\pi}/{24}$.
			For any such $\theta$ and ${\pi}/{2}\le \beta\le  {17\pi}/{24}$ and $K\le \sqrt{3}\le 1.76$, we have
			\begin{align*}\|w_1\|^2&=K^2+4\cos^2(\beta/2)-4K\cos(\theta)\cos(\beta/2)\\ &\le K^2+4\cos^2\left(\frac{\pi}{4}\right)-4K\cos\left(\frac{\pi}{48}\right)\cos\left(\frac{17\pi}{48}\right)\le K^2+2-1.76 K\le 2,\end{align*}
			proving the claim.
		\end{proof}
		
		We also need the following observation.
		\begin{claim}\label{cl:largeAngle}
			If (a) does not hold, then there exists some $i,j$ such that the (shortest) angle between $v_i$ and both of $v_j,-v_j$ is at least ${7\pi}/{24}$.
		\end{claim}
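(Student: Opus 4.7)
The plan is to recast the claim as a purely circular problem by passing to the projective line $\mathbb{RP}^1 \cong \mathbb{R}/\pi\mathbb{Z}$, a circle of circumference $\pi$. For each unit vector $v_\ell$, let $\tilde v_\ell \in \mathbb{RP}^1$ denote its image; the condition that the shortest angle between $v_i$ and both of $v_j, -v_j$ is at least $7\pi/24$ translates into the circular distance $d(\tilde v_i, \tilde v_j) \ge 7\pi/24$. Moreover, condition (a) is equivalent to saying that all the $\tilde v_\ell$ fit inside some closed arc of $\mathbb{RP}^1$ of length $7\pi/24$. Listing the distinct $\tilde v_\ell$ cyclically and letting $g_{\max}$ denote the maximum gap between consecutive points, (a) holds if and only if $g_{\max} \ge \pi - 7\pi/24 = 17\pi/24$, so the hypothesis that (a) fails reduces to the single inequality $g_{\max} < 17\pi/24$, and the goal becomes producing two $\tilde v_i, \tilde v_j$ at $\mathbb{RP}^1$-distance at least $7\pi/24$.

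I would then split into two cases based on the size of $g_{\max}$. In the first case, when $g_{\max} \ge 7\pi/24$, the two points adjacent to the maximum gap are separated by $g_{\max}$ going one way around $\mathbb{RP}^1$ and by $\pi - g_{\max}$ going the other, so their $\mathbb{RP}^1$-distance equals $\min(g_{\max}, \pi - g_{\max})$, and the bounds $7\pi/24 \le g_{\max} < 17\pi/24$ force both quantities to be at least $7\pi/24$. In the second case, when $g_{\max} < 7\pi/24$, the distinct $\tilde v_\ell$ cover $\mathbb{RP}^1$ densely enough that every point of the circle lies within $\mathbb{RP}^1$-distance $g_{\max}/2 < 7\pi/48$ of some $\tilde v_\ell$. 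Applying this to the antipode of an arbitrary $\tilde v_i$, which sits at $\mathbb{RP}^1$-distance $\pi/2$ from $\tilde v_i$, yields some $\tilde v_j$ with
\[d(\tilde v_i, \tilde v_j) \ge \frac{\pi}{2} - \frac{7\pi}{48} = \frac{17\pi}{48} > \frac{14\pi}{48} = \frac{7\pi}{24}\]
by the triangle inequality.

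I do not expect any substantive obstacle beyond careful bookkeeping of constants. The only subtlety is choosing the case-split threshold: the large-gap argument requires the interval $[7\pi/24, 17\pi/24)$ to be nonempty, and the small-gap antipode bound $\pi/2 - g_{\max}/2 > 7\pi/24$ demands $g_{\max} < 5\pi/12$. Both conditions align cleanly with the split at $g_{\max} = 7\pi/24$, since $7\pi/24 < 17\pi/24$ and $7\pi/24 < 5\pi/12$, so the rest of the argument reduces to elementary interval arithmetic on $\mathbb{RP}^1$.
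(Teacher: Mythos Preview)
Your proof is correct and takes a genuinely different route from the paper's. The paper argues by contradiction: assuming the conclusion fails, it uses the case $i=1$ of the negation to align (after negations) all vectors within angle $7\pi/24$ of $v_1$, then lets $v_i,v_j$ be the vectors of minimal and maximal argument and checks directly that the angle from $v_i$ to both $v_j$ and $-v_j$ lies in $[7\pi/24,\pi-14\pi/24]$, contradicting the assumed negation. Your argument is direct rather than by contradiction: you pass to $\mathbb{RP}^1$, reformulate the failure of (a) as $g_{\max}<17\pi/24$, and split on whether $g_{\max}\ge 7\pi/24$ (use the endpoints of the maximal gap) or $g_{\max}<7\pi/24$ (use an antipode plus a density bound). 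Your formulation is arguably cleaner and makes the role of the constant $7\pi/24$ more transparent, since one sees immediately that any threshold strictly below $\pi/3$ would work; the paper's argument, on the other hand, avoids introducing the projective quotient and the case split by exploiting the extra hypothesis furnished by the contradiction assumption, at the cost of slightly more bookkeeping with arguments.
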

		\begin{proof}
			If this were not the case, then we can assume, possibly after replacing some vectors with their negations, that every vector has angle at most ${7\pi}/{24}$ with $v_1$, and possibly by rotating all of our vectors, we can assume $\arg(v_1)={7\pi}/{24}$.  If we let $v_i$ be such that $\arg(v_i)=\min_k \arg(v_k)$ and $v_j$ be such that $\arg(v_j)=\max_k \arg(v_k)$, then we must have $\arg(v_j)\ge \arg(v_i)+ {7\pi}/{24}$ (since otherwise (a) would hold for $i$ by the definition of $i,j$).  We also have
			\[\arg(v_j)\le {7\pi}/{24}+\arg(v_1)\le {14\pi}/{24}+\arg(v_i)\le \pi+\arg(v_i)\]
			by the assumption on $v_1$, which implies that the angle between $v_i$ and $v_j$ is $\arg(v_j)-\arg(v_i)\ge {7\pi}/{24}$.  Similarly, because $\arg(v_j)\le {7\pi}/{24}+\arg(v_1)\le \pi$ we find that $\arg(-v_j)=\pi+\arg(v_j)$.  This implies that $\pi\le \arg(-v_j)-\arg(v_i)\le \pi+{14\pi}/{24}$, which implies that the angle between $-v_j$ and $v_i$ is $2\pi -\arg(-v_j)+\arg(v_i)\ge \pi-{14\pi}/{24}\ge {7\pi}/{24}$ as desired.
		\end{proof}
		
		We now complete the proof.  Assume that (a) does not hold and let $i,j$ be as in Claim~\ref{cl:largeAngle}, and let $k$ be any index not equal to $i,j$.  We claim that (b) holds with this choice of $i,j,k$. Let $w\in \mathbb{R}^2$ be an arbitrary vector of norm at most $\sqrt{2}$. Observe that there exists some $\epsilon_k \in \{-1,1\}$ such that $w'=w+\epsilon_k v_k$ has norm at most $\sqrt{3}$;  indeed, this follows by taking any $\epsilon_k$ such that $w$ and $\epsilon_k v_k$ have angle at most ${\pi}/{2}$ between them.  Possibly by replacing $v_j$ with its negation, we may assume that the (shortest) angle $\beta$ between $v_i$ and $v_j$ satisfies $\beta\ge {\pi}/{2}$, and by our choice of $i,j$, we must have $\beta\le {17\pi}/{24}$ (as otherwise, the angle between $v_i$ and $-v_j$ would be at most ${7\pi}/{24}$).  Applying Claim~\ref{cl:geometry} with $u=v_i,u'=v_j$ gives signs with the desired property, finishing the proof.
	\end{proof}
	We now have all that we require to prove Theorem~\ref{thm:main}.
	\begin{proof}[Proof of Theorem~\ref{thm:main}]
		Let $V=(v_1,\ldots,v_n)$ be a sequence of unit vectors in $\mathbb{R}^2$ with\footnote{We leave the  $n=1$ case as an exercise to the reader.} $n\ge 2$.  The result holds if $n$ is even by Theorem~\ref{thm:even}, so we may assume that $n\ge 3$ is odd.
		
		First, consider the case that Proposition~\ref{prop:reduceToEven}(a) applies to $V$, and possibly by rotating and reordering our vectors we can assume $0= \arg(v_1)\le \cdots \le \arg(v_n)\le {7\pi}/{24}$. By Lemma~\ref{lem:generalizedCircleOrder}, we have
		\[\sum_{i=1}^{n-1} \|v_i-v_{i+1}\|^2\le \|v_1-v_n\|^2\le 2\sin^2\left(\frac{7\pi}{48}\right)\le 1/2.\]
		By Proposition~\ref{prop:pairing}, we then have that $\|\sigma_{V-\{v_n\}}\|\le 1$ occurs with probability $\Omega(n^{-1})$, and conditional on this event, we have with probability at least $1/2$ that $\|\sigma_
		{V-\{v_n\}}+\eps_n v_n\|\le \sqrt{2}$ (since $\sigma_{V-\{v_n\}}$ and $\eps_n v_n$ will be at angle at least $\pi/2$ from each other with probability at least $1/2$), proving the result in this case.
		
		Next, assume that Proposition~\ref{prop:reduceToEven}(b) applies for some $i,j,k$, and let $V'=V-\{v_{i}-v_{j}-v_k\}$.  By Theorem~\ref{thm:even} we have $\|\sigma_{V'}\|\le \sqrt{2}$ with probability $\Omega(n^{-1})$, and conditional on this event, we have by Proposition~\ref{prop:reduceToEven}(b) that $\| \sigma_{V'}+\eps_{i}v_{i}+\eps_{j}v_{j}+\eps_k v_k\|\le \sqrt{2}$ with probability at least $1/8$, proving the result in this case and hence completing the proof.
	\end{proof}
	
	As an aside, we note that our approach here can be used to obtain results for the reverse Littlewood--Offord problem in $\RR^d$ in general. In particular, similar geometric arguments can be used to show that for every $d\ge 2$, there exist absolute constants $r,c>0$ depending only on $d$ such that such that for any unit vectors $v_1,\ldots,v_n\in \mathbb{R}^d$ and independent Rademacher random variables $\eps_1,\dots,\eps_n$, we have $\Pr[||\epsilon_1 v_1+\ldots+\epsilon_n v_n||_2 \leq r]\geq n^{-d^2/4}.$. However, we do not know how to use these ideas to obtain the same tight results of (Beck's) Theorem~\ref{thm:Beck}.
	
	\section{Conclusion}\label{sec:conc}
	In this paper we gave a new proof of an old conjecture of Erd\H{o}s by showing that if $v_1,\ldots,v_n\in \mathbb{R}^2$ are unit vectors, then with probability $\Omega(n^{-1})$, their random signed sum has norm at most $\sqrt{2}$.  The radius $\sqrt{2}$ is best possible here for even $n$ by considering the case $v_i=(1,0)$ for an odd number of $i$ and $v_i=(0,1)$ otherwise, but as far as we know the following stronger bound might hold for odd $n$, matching the original conjecture of Erd\H{o}s.
	\begin{conj}
		There exists an absolute constant $c>0$ such that for any unit vectors $v_1,\ldots,v_n\in \mathbb{R}^2$ with $n$ odd and independent Rademacher random variables $\eps_1,\dots,\eps_n$, we have
		$$\Pr\left[\|\epsilon_1 v_1+\ldots+\epsilon_n v_n\|_2 \leq 1\right]\geq \frac{c}{n}.$$
	\end{conj}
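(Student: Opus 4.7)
The plan is to follow the architecture of Theorem~\ref{thm:even}, splitting $V$ into a near-extremal case (when $V$ is $(2,\gamma)$-close) and a far-from-extremal case (when $V$ is $(2,\gamma)$-far), for a small absolute constant $\gamma>0$ to be chosen. The bound $1$ in the conjecture is tight and is saturated by the example with $m+1$ copies of $(1,0)$ and $m$ copies of $(0,1)$ when $n=2m+1$: here $\sigma_V=(S_1,S_2)$ with $S_1,S_2$ independent Rademacher sums of opposite parities, and the local central limit theorem for the binomial gives $\Pr[\|\sigma_V\|\le 1]=\Pr[|S_1|=1]\Pr[S_2=0]=\Theta(1/n)$. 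Thus any proof must be sharp enough to recognize this concentration.

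For the $(2,\gamma)$-close case, the vectors split into groups $I_1,I_2$ clustered near two unit directions $x_1,x_2$; since $n$ is odd, $|I_1|$ and $|I_2|$ have opposite parities. Writing each $v_i=s_i x_j+w_i$ for $i\in I_j$ (with $s_i\in\{\pm 1\}$ and $\|w_i\|\le \gamma$), one decomposes $\sigma_V=S_1 x_1+S_2 x_2+E$, where $S_j=\sum_{i\in I_j}\eps_i s_i$ are independent Rademacher sums and $E$ is a perturbation with $\EE\|E\|^2\le n\gamma^2$. A two-dimensional local limit estimate for $(S_1,S_2)$, taken in the $(x_1,x_2)$-frame and accounting for the angle between $x_1,x_2$, places the event $\{|S_1|,|S_2|\le 1\}$ (subject to the parity constraint) with probability $\Theta(1/n)$; one then shows by a careful second-moment argument that the perturbation term $E$ does not destroy the event $\|\sigma_V\|\le 1$ with positive conditional probability, provided $\gamma$ is taken sufficiently small.

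For the $(2,\gamma)$-far case, the strategy is to remove a carefully chosen vector $v_\star\in V$, pair up the remaining $n-1$ (even-many) vectors so that $\sum\|v_{2i-1}-v_{2i}\|^2<1$, and apply Proposition~\ref{prop:pairing} with $r=1$ to conclude $\Pr[\|\sigma_{V\setminus\{v_\star\}}\|\le 1]=\Omega(1/n)$. One then adds back $\pm v_\star$: by the symmetry of $\sigma_{V\setminus\{v_\star\}}$,
\[
\Pr[\|\sigma_V\|\le 1]=\Pr\bigl[\sigma_{V\setminus\{v_\star\}}\in B_1(v_\star)\bigr],
\]
so it suffices to lower-bound the probability of landing inside the translated unit ball at $v_\star$, which should be achievable via a ``translated'' variant of Proposition~\ref{prop:pairing} in which Chebyshev's inequality is applied around the point $v_\star$ rather than the origin, combined with an auxiliary geometric estimate analogous to Lemma~\ref{lem:kiteBound}.

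The main obstacle is the $(2,\gamma)$-far case: the geometric reductions of Section~\ref{sec:2d} yield at best $\sum\|v_{2i-1}-v_{2i}\|^2\le 2-4\sin^3(\gamma/2)$ after pigeonholing, which is far from the bound $<1$ required for applying Proposition~\ref{prop:pairing} with radius $1$. Overcoming this appears to require either (i)~a substantially refined pairing scheme that simultaneously exploits many pairwise $\gamma$-far triples, or (ii)~an essentially sharp two-dimensional local central limit theorem for Rademacher sums of unit vectors predicting density $\Theta(1/n)$ throughout the concentration region---either of which would constitute a significant departure from the elementary methods of this paper. The near-extremal case should be tractable but making the perturbation analysis uniform in the angle between $x_1$ and $x_2$ will take some care.
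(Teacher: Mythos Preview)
The statement you are attempting is a \emph{conjecture} in the paper, not a theorem: the authors explicitly remark that their methods ``can be adjusted to prove $\Pr[\|\sigma_V\|\le r]=\Omega(n^{-1})$ for some $r<\sqrt{2}$, though it seems new ideas are needed to get all the way down to $r=1$.'' There is therefore no proof in the paper to compare against, and your proposal should be read as an attack on an open problem.

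Your outline is honest about its own incompleteness, and the obstacle you isolate in the $(2,\gamma)$-far case is exactly the one that makes this a conjecture rather than a corollary. The pairing machinery of Section~\ref{sec:prelim} feeds into Proposition~\ref{prop:pairing} via the inequality $r^2>\sum\|v_{2i-1}-v_{2i}\|^2$, and with $r=1$ this demands a pairing with total squared gap below $1$, whereas Lemma~\ref{lem:threePointSet} plus pigeonhole only delivers something just below $2$. Neither of your proposed escapes---a dramatically better pairing exploiting many far triples, or a genuine two-dimensional local limit theorem at the scale of a single lattice cell---is supplied, and both are, as you say, well outside the elementary scope of the paper. So as written this is a plan, not a proof.

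Two further points deserve scrutiny. First, your ``translated'' Proposition~\ref{prop:pairing} is not obviously available: the proposition ultimately rests on Lemma~\ref{lem:Chebyshev}, which pigeonholes two independent copies of $\sigma$ into the same small cube and hence says nothing about any \emph{specific} translate such as $B_1(v_\star)$. Getting mass at a prescribed off-centre point is precisely the difficulty the pairing trick was designed to avoid. Second, in the $(2,\gamma)$-close case your perturbation step is more delicate than advertised when the angle between $x_1$ and $x_2$ degenerates. If all vectors lie within $\gamma$ of a single direction $x$, then on the event $|S_1+S_2|=1$ the unperturbed sum has norm exactly $1$, and any nonzero component of $E$ orthogonal to $x$ strictly increases the norm; showing that $E$ has a helpful (inward) component along $x$ with conditional probability bounded away from zero is not a routine second-moment argument. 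These are not fatal to the overall architecture, but they are real gaps that would need new ideas to close.
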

	Our present proof of Theorem~\ref{thm:main} admits a bit of slack when $n$ is odd and can be adjusted to prove $\Pr[\|\sigma_V\|\le r]=\Omega(n^{-1})$ for some $r<\sqrt{2}$, though it seems new ideas are needed to get all the way down to $r=1$. The order of magnitude of $\Omega(n^{-1})$ in Theorem~\ref{thm:main} is best possible, but exactly determining the implicit constant seems to be an intriguing problem in discrete geometry.
	\begin{quest}
		For $n\ge 1$, let $V$ range over all sequences of $n$ unit vectors in $\mathbb{R}^2$. If $r > 0$, how does the function
		\[f(r)=\liminf_{n\rightarrow \infty} \inf_V \Pr[\|\sigma_V\| \le r]n\]
		behave?  In particular, is $f(r)$ always an integer multiple of ${4}/{\pi}$?
	\end{quest}
	Theorem~\ref{thm:main} shows that $f(r) > 0$ if and only if $r\ge \sqrt{2}$. Note that the `in particular' part of this question would hold if the minimizer of the probability always consisted of roughly $n/2$ copies of $(1,0)$ and $(0,1)$.  For the specific case of $r=\sqrt{2}$ we believe the following even stronger statement holds.
	\begin{conj}
		For all $n$ sufficiently large, there exists some $t\le n$ such that for any unit vectors $v_1,\ldots,v_n\in \mathbb{R}^2$ and independent Rademacher random variables $\eps_1,\dots,\eps_n$, we have
		$$\Pr\left[||\epsilon_1 v_1+\ldots+\epsilon_n v_n||_2 \leq \sqrt{2}\right]\geq \Pr\left[||\epsilon_1 v_1'+\ldots+\epsilon_n v_n'||_2 \leq \sqrt{2}\right],$$
		where $v'_i=(1,0)$ for $i\le t$ and $v'_i=(0,1)$ for $i>t$.
	\end{conj}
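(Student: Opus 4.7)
The plan is to combine compactness with a structural characterization of the minimizer and an explicit calculation on two-direction configurations. Since $p(V) := \Pr[\|\sigma_V\| \le \sqrt{2}]$ takes only finitely many values as $V$ ranges over $(S^1)^n$, the infimum $\inf_V p(V)$ is attained; let $V^*$ be a minimizer. Because $\sigma_V$ is invariant in distribution under replacing any $v_i$ with $-v_i$, I may normalize $V^*$ so that all its vectors lie in the closed upper half plane. The goal then reduces to showing that $V^*$ may be taken to coincide (up to signs and rotation) with the claimed extremal family of $t$ copies of $(1,0)$ and $n-t$ copies of $(0,1)$.

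The first and hardest step is a structural reduction: any minimizer may be chosen to use at most two distinct directions (up to sign). A natural approach is via local perturbation --- rotate a single vector $v_i$ continuously while keeping the others fixed and study how $p$ changes as a function of the rotation angle. Because $p$ is piecewise constant with discontinuities occurring exactly at configurations where some sign choice $\epsilon$ gives $\|\sigma_V(\epsilon)\| = \sqrt{2}$, minimality of $V^*$ imposes a detailed balance condition on sign patterns crossing in and out of the ball as $v_i$ rotates. One would then try to argue, via a convexity or counting estimate on these boundary crossings, that unless $v_i$ is already parallel to some other vector, rotating it toward another direction must strictly decrease (or at worst preserve) $p$. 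Iterating reduces the number of distinct directions of $V^*$ to at most two.

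Once reduced to two directions, parameterize $V^*$ as $a$ copies of $(1,0)$ and $b = n-a$ copies of $(\cos\theta,\sin\theta)$ for some $\theta \in [0,\pi]$; then $\|\sigma_V\|^2 = S_a^2 + 2\cos\theta \cdot S_a S_b + S_b^2$, where $S_a, S_b$ are independent Rademacher sums on $a$, $b$ terms. For each lattice point $(S_a,S_b)$, the event $\|\sigma_V\|^2 \le 2$ becomes an explicit trigonometric inequality on $\theta$, so $p$ is a finite weighted sum of such indicator probabilities, each with a binomial coefficient weight. A direct case analysis --- using that only the $(S_a, S_b)$ of small absolute value contribute to the event --- should show that $\theta = \pi/2$ minimizes $p$ over $\theta \in [0,\pi]$. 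Finally, minimizing over $a \in \{0, 1, \dots, n\}$ using Stirling's formula and a parity analysis of $\Pr[S_m \in \{-1,0,1\}]$ identifies the best $t$, completing the proof.

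The main obstacle is the structural reduction. While the variational intuition is transparent, turning it into a rigorous argument appears genuinely subtle: the dependence of $p$ on individual vectors is combinatorial in nature rather than smooth, and unlike the upper-bound Littlewood--Offord problems where Sperner-type chain arguments deliver structure directly, here the absence of a simple chain structure means that local rearrangements may cause unpredictable changes in $p$. It seems plausible that new ideas are required --- perhaps a coupling between $V^*$ and a two-direction comparison configuration, or a finer pairing argument exploiting the sharpness of the threshold $r = \sqrt{2}$ --- and in the absence of such a tool the conjecture appears genuinely beyond the reach of the present techniques.
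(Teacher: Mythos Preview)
The statement you are attempting to prove is a \emph{conjecture} in the paper, presented without proof in the concluding section as an open problem. There is therefore no proof in the paper to compare your proposal against.

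As for the proposal itself, you have correctly identified the essential obstruction: the structural reduction to two directions is the entire difficulty, and your sketch does not establish it. The local-perturbation heuristic you describe is not enough. Rotating a single $v_i$ changes $p(V)$ only at finitely many angles, but at each such discontinuity several sign patterns may cross the circle $\|\cdot\|=\sqrt{2}$ simultaneously and in opposite directions; there is no evident monotonicity, convexity, or detailed-balance principle forcing the net change to be nonpositive unless $v_i$ already aligns with another vector. Minimality of $V^*$ gives information only about the \emph{values} of $p$ on either side of a discontinuity, not about the combinatorics of which sign patterns cross, so the ``iterate to collapse directions'' step has no actual mechanism behind it. Your own final paragraph acknowledges this.

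Two smaller points. First, even granting the two-direction reduction, the claim that $\theta=\pi/2$ minimizes $p$ among all angles is asserted (``a direct case analysis \ldots\ should show'') but not carried out; for intermediate $\theta$ the set of lattice points $(S_a,S_b)$ satisfying $S_a^2+2\cos\theta\,S_aS_b+S_b^2\le 2$ changes in a way that depends delicately on the parities of $a$ and $b$, and it is not obvious a priori that orthogonality is worst. Second, the final optimization over $a$ is unnecessary for the conjecture as stated: it asserts only the \emph{existence} of some $t$, not its value, so once the minimizer is shown to be an orthogonal two-direction configuration you are done.
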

	
	\section*{Acknowledgements}
	We thank Noga Alon, Poornima Belvotagi, Timothy Chu, Jacob Fox, Zachary Hunter, Huy Pham and Shengtong Zhang for helpful conversations. The first author was supported by NSF grant DMS-2103154, the third author was supported by NSF grant DMS-2237138 and a Sloan Research Fellowship, and the fourth author was supported by the NSF postdoctoral research fellowship under grant DMS-2202730.

	\bibliographystyle{amsplain}
	\bibliography{refs}
	
\end{document}